\documentclass[a4paper,12pt]{amsart}
\usepackage{cite}
\usepackage{amsmath}
\usepackage{amsthm}
\usepackage{amsfonts}
\usepackage{mathrsfs}
\usepackage{geometry}
\usepackage{amssymb}
\usepackage{amsfonts}
\usepackage{tikz-cd}
\usepackage{romannum}
\usepackage{titlesec}
\usepackage{enumitem}
\usepackage{xcolor}
\usepackage{bm}
\newtheorem{theorem}{Theorem}[section]
\newtheorem{corollary}[theorem]{Corollary}
\newtheorem{lemma}[theorem]{Lemma}
\newtheorem{definition}[theorem]{Definition}
\newtheorem{proposition}[theorem]{Proposition}
\newtheorem{remark}[theorem]{Remark}

\titleformat{\section}       
{\normalfont\centering\large\bfseries} 
{\thesection}              
{0.5em}                      
{}          
{}

\titleformat{\subsection}
{\bfseries}
{\thesubsection}
{1em}
{}

\author{Liqingjing Wang}

\address{Liqingjing Wang, Universit\'e C\^ote d'Azur,  LJAD, France}
\email{liqingjing.wang@univ-cotedazur.fr}

\title{Deformations of canonical bundle for smooth weakly K\"ahler morphisms} 
\date{\today}

\begin{document}
\begin{abstract} 
Let $\pi: \mathcal X \rightarrow \Delta$ be a smooth proper family of compact complex manifolds
such that the central fiber $\mathcal{X}_0$ is K\"ahler. Then all the fibers close to $0$ are K\"ahler and $\pi$ is a weakly K\"ahler morphism, even if $\mathcal X$ is not a K\"ahler space.
In this paper we show that if $\dim \mathcal{X}_0=4$ and $K_{\mathcal{X}_0}$ is nef, then $K_{\mathcal X_t}$ is nef for all $t$ in a neighbourhood of the origin.
\end{abstract}
\maketitle
	\pagenumbering{arabic}
	\section{Introduction}

\noindent Let $\Delta$ be a unit disc in $\mathbb{C}$ and let $\pi:\mathcal{X}\longrightarrow\Delta$ be a proper holomorphic submersion. We say that $\mathcal{X}\longrightarrow\Delta$  is a family of deformations of the fiber $\mathcal{X}_0$ and each fiber $\mathcal{X}_t$, $t\in\Delta$, is called a deformation of $\mathcal{X}_0$. In addiction, if $\pi:\mathcal{X}\longrightarrow\Delta$ is a projective morphism, then $\pi:\mathcal{X}\longrightarrow\Delta$ is called a projective family.
In this paper we consider the following question:

\medskip

\noindent \textbf{Question:} Let $\pi:\mathcal{X}\longrightarrow\Delta$ be a deformation family of a compact K{\"a}hler manifold $X$ of dimension $n$. If the canonical line bundle $K_X$ is nef, is $K_{\mathcal{X}_t}$ nef for all $t$ in a small neighborhood of $0$?

\medskip

If $\pi:\mathcal{X}\longrightarrow\Delta$ is a projective morphism, J.A Wi{\'s}niewski \cite[Thm.3]{zbMATH05636215} proved that if the canonical line bundle of one fiber is not nef, then none of canonical line bundles of fibers in $\pi:\mathcal{X}\longrightarrow\Delta$ are nef.

Recently, \cite[Thm.3.9]{arXiv:2510.23967} gave a positive answer for the smooth projective family $\pi:\mathcal{X}\longrightarrow\Delta$ for arbitrary dimensions.  \cite[Lem.1.1]{zbMATH00058549} plays a crucial role in their work. 
 However,   \cite[Lem.1.1]{zbMATH00058549} is hard to generalize into the K{\"a}hler setting  because a real $(1,1)$-class $\omega_t$ changes continuously in $H^2(\mathcal{X}_0,\mathbb{R})$. Therefore, to investigate the weakly K{\"a}hler case, we might need to try another approach.  One possible approach is how $K_{\mathcal{X}_t}$-negative rational curves deform with a deformation family $\pi:\mathcal{X}\longrightarrow\Delta$.  In \cite{arXiv:2510.23967}, they also used  MMP for three dimensional compact K{\"a}hler manifolds to achieve this goal and gave a positive answer for a family of smooth K{\"a}hler manifolds of dimension small or equal to three.
We give a positive answer for $n=4$:

\begin{theorem}\label{nefness preserves by deformation THM}
	Let $\mathcal{X}_{0}$ be a four dimensional compact K{\"a}hler manifold with $K_{\mathcal{X}_0}$ nef.
	Let $\pi:\mathcal{X}\longrightarrow\Delta$ be a deformation of $\mathcal{X}_0$ over the unit disc $\Delta\subset\mathbb{C}$. 
	Then there exists a small neighborhood $\Delta'$ of $0$, such that $K_{\mathcal{X}_t}$ is nef for all $t\in\Delta'$.
\end{theorem}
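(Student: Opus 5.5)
We argue by contradiction, tracking $K$-negative rational curves through the family. Suppose there is a sequence $t_k\to 0$ with $K_{\mathcal{X}_{t_k}}$ not nef. By Kodaira--Spencer stability, after shrinking $\Delta$ every fiber $\mathcal{X}_t$ is K\"ahler. We will fix a smooth family of K\"ahler forms $\omega_t$ on $\mathcal{X}_t$, depending continuously on $t$, obtained by restricting a hermitian form on $\mathcal{X}$ and correcting fiberwise. Since $K_{\mathcal{X}_{t_k}}$ is not nef, the cone theorem for compact K\"ahler fourfolds (in the form available from the K\"ahler MMP in dimension four: H\"oring--Peternell in dimension three, and its extension to fourfolds for the existence of extremal rational curves) gives a rational curve $C_k\subset \mathcal{X}_{t_k}$ with
\[
0< -K_{\mathcal{X}_{t_k}}\cdot C_k \le 2\dim \mathcal{X}_{t_k}=8 .
\]
The key additional input is a \emph{uniform} bound on the degrees $\int_{C_k}\omega_{t_k}$. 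We will obtain it from the bend-and-break/cone theorem structure: $C_k$ spans a $K$-negative extremal ray $R_k$, and we may choose $C_k$ to be a minimal rational curve in $R_k$. Then we bound $\omega_{t_k}\cdot C_k$ using the fact that the classes $[\omega_{t_k}]+\varepsilon K$ vary continuously in $H^2(\mathcal{X}_0,\mathbb{R})$ under the $C^\infty$ trivialization.

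Given a uniform degree bound, Bishop's theorem applied in the relative Barlet space (cycle space) of $\mathcal{X}\to\Delta$ yields compactness. The cycles $C_k$, seen as compact analytic cycles in $\mathcal{X}$ of bounded volume with respect to a hermitian metric on $\mathcal{X}$, subconverge to an effective $1$-cycle $C_0=\sum a_i C_0^i\subset \mathcal{X}_0$. Intersection numbers with the topological class $c_1(K_{\mathcal{X}/\Delta})$ are continuous in such limits, so
\[
K_{\mathcal{X}_0}\cdot C_0=\lim_k K_{\mathcal{X}_{t_k}}\cdot C_k<0 .
\]
In fact the limit is $\le -1$, since the values are negative integers. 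Hence some component satisfies $K_{\mathcal{X}_0}\cdot C_0^i<0$, contradicting nefness of $K_{\mathcal{X}_0}$.

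The main obstacle is the uniform degree bound. In the weakly K\"ahler setting $\mathcal{X}$ need not carry a global K\"ahler form, so the volumes of the $C_k$ are not controlled cohomologically by a single class. This is exactly where the projective argument via the cited lemma of Wi\'sniewski-type breaks down.

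Our first approach to this step is as follows. Since $\dim\mathcal{X}_t=4$, the Mori-type rational curve $C_k$ can be chosen through a general point of the exceptional locus with $-K\cdot C_k\le 8$. We then argue by a second contradiction. If $\omega_{t_k}\cdot C_k\to\infty$, normalize the classes $[C_k]/(\omega_{t_k}\cdot C_k)$. They converge, after transport to $H^{6}(\mathcal{X}_0,\mathbb{R})$, to a nonzero class $\gamma$ in the closed cone of positive currents on $\mathcal{X}_0$ with $K_{\mathcal{X}_0}\cdot\gamma=0$ and $\omega_0\cdot\gamma=1$. Such a class is itself a limit of currents of integration, so some positive closed current on $\mathcal{X}_0$ would have to be approached by $K$-negative extremal rays.

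We will try to rule this out by applying the K\"ahler cone theorem on $\mathcal{X}_0$: for $\varepsilon>0$, the class $K_{\mathcal{X}_0}+\varepsilon\omega_0$ is K\"ahler on $\mathcal{X}_0$. By openness of the K\"ahler cone in families (Kodaira--Spencer, using the weakly K\"ahler morphism), the transported class $K_{\mathcal{X}_t}+\varepsilon\omega_t$ remains K\"ahler for $|t|<\delta(\varepsilon)$. This gives $-K_{\mathcal{X}_{t_k}}\cdot C_k<\varepsilon\,\omega_{t_k}\cdot C_k$ for large $k$, which is a relation between the two quantities but not by itself a bound. We expect this step, choosing $\varepsilon$ uniformly against the length bound $8$, to require the finer structure of extremal contractions on K\"ahler fourfolds, namely that they deform with the family. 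This is where the four-dimensional MMP results must be invoked.
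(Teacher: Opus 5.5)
There is a genuine gap, and it is the step you flag yourself: the uniform bound on $\omega_{t_k}\cdot C_k$ is never proved, and the route you sketch cannot prove it. If $\omega_{t_k}\cdot C_k\to\infty$, the normalized limit $\gamma$ satisfies $K_{\mathcal{X}_0}\cdot\gamma=0$. That is perfectly compatible with $K_{\mathcal{X}_0}$ nef, so no contradiction appears. Likewise, the inequality $-K_{\mathcal{X}_{t_k}}\cdot C_k<\varepsilon\,\omega_{t_k}\cdot C_k$ holds automatically once the right-hand side blows up.

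The obstruction is structural. The $C_k$ lie in unrelated homology classes on different fibres, and curves that live only on special fibres need not have bounded degree. For an illustration of the phenomenon (not a $K$-negative example), think of $(-2)$-curves in a non-isotrivial family of K3 surfaces: they appear on a countable set of fibres accumulating at $0$, with unbounded degree. Kählerness of $\mathcal{X}_0$ only gives volume control component by component. By Greb--Lehn--Rollenske, after shrinking $\Delta$ each irreducible component of $\text{Chow}(\mathcal{X}/\Delta)$ is proper over $\Delta$, since along a component the homology class is fixed. So Bishop compactness is available for curves whose component dominates the base, not for an arbitrary sequence $C_k$.

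A smaller point: producing $C_k$ via the cone theorem requires $K_{\mathcal{X}_{t_k}}$ to be pseudo-effective, which you do not check. The paper gets this from Ou's uniruledness criterion together with deformation invariance of uniruledness.

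The paper avoids sequences altogether and instead plays a countable set against an open set.

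\begin{itemize}
\item[(i)] Components of the relative Chow space that do not dominate $\Delta'$ map to points, giving a countable set $Z$. If $t\notin Z$, a $K$-negative rational curve on $\mathcal{X}_t$ lies in a dominating, hence proper, component. It therefore specializes to a $1$-cycle on $\mathcal{X}_0$ with the same class, contradicting nefness. So the non-nef locus lies in $Z$.
\item[(ii)] The non-nef locus is open. On a non-nef fibre one takes the contraction of a $K$-negative extremal ray (Hacon--Xie). Using the classification of its fibres (small case $F\simeq\mathbb{P}^2$ with $N_{F/\mathcal{X}_t}\simeq\mathcal{O}(-1)^{\oplus 2}$; divisorial types $(3,0)$, $(3,1)$, $(3,2)$), one finds $F$ in the exceptional locus with $-K_{\mathcal{X}_t}|_F$ ample, $H^1(F,N_{F/\mathcal{X}})=0$ and $h^0(F,N_{F/\mathcal{X}})=h^0(F,N_{F/\mathcal{X}_t})+1$. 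Hence $F$ deforms unobstructedly to all nearby fibres, and $-K$ stays ample on the deformations.
\end{itemize}

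A nonempty open subset of $\Delta'$ cannot be countable, which gives the contradiction. Your closing remark that contractions should ``deform with the family'' points at the right input. However, that input is used to prove openness of the non-nef locus, not to bound degrees, and without the countability step it does not close your argument.
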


Note that our result doesn't require any K{\"a}hler property for the total space $\mathcal{X}$. Notice that a total space $\mathcal{X}$ isn't a K{\"a}hler manifold in general, a counterexample is showed in  \cite[Example.3.9]{zbMATH03935471}. However, the manifold $\mathcal X_t$ is compact K\"ahler for $t$ close to zero, which is a classical result in \cite{zbMATH05016783}, and the morphism is always weakly K\"ahler (cf. \cite[Thm.6.3]{zbMATH03935470}).

Our work is based on a contraction theorem for Compact K{\"a}hler fourfolds in order to give a detailed picture of the deformation of $K_{\mathcal{X}_t}$-negative rational curves with $\mathcal{X}_t$.  With a clear picture of the exceptional locus, we obtain the main technical result of this paper which implies Theorem \ref{nefness preserves by deformation THM}:

\begin{theorem}\label{general fibre of contraction deforms with the deformation family THM}
	Let $\mathcal{X}_{0}$ be a four dimensional compact K{\"a}hler manifold with $K_{\mathcal{X}_0}$ nef.
	Let $\pi:\mathcal{X}\longrightarrow\Delta$ be a deformation of $\mathcal{X}_0$ over the unit disc $\Delta\subset\mathbb{C}$. Let $t\in\Delta$ be an arbitrary point contained in a small neighborhood of $0$. If $K_{\mathcal{X}_t}$ is pseudo-effective and not nef, then there is a subvariety $F\subset \mathcal{X}_t$ with $-K_{\mathcal{X}_t}|_F$ ample such that $F$ deforms with $\mathcal{X}_t$.
\end{theorem}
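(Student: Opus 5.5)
Fix $t$ near $0$ with $K_{\mathcal{X}_t}$ pseudo-effective but not nef. The plan is to run the first step of the Kähler MMP on $X:=\mathcal{X}_t$, which is a compact Kähler fourfold because $t$ is close to $0$. By the cone and contraction theorem for compact Kähler fourfolds, there is a $K_X$-negative extremal ray $R\subset\overline{NA}(X)$ generated by a rational curve $C$ with $0<-K_X\cdot C\le 2\dim X=8$. This ray has a contraction $\varphi_R: X\to Y$ onto a normal compact Kähler space. Since $K_X$ is pseudo-effective, $\varphi_R$ cannot be of fibre type, so it is either divisorial or small.

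Let $E=\mathrm{Exc}(\varphi_R)$. The intended choice of $F$ is an irreducible component of a general fibre of $\varphi_R|_E$. Such a fibre is contracted by $\varphi_R$, so it is covered by curves whose classes lie in $R$. Using relative ampleness of $-K_X$ over $Y$, we get that $-K_X|_F$ is ample. To show that $F$ deforms with $\mathcal{X}_t$, meaning it extends to a family $\mathcal{F}\to\Delta''$ with $\mathcal{F}_s\subset\mathcal{X}_s$, I would use Wiśniewski's inequality together with the classification of exceptional loci of smooth fourfolds. In the divisorial case, $\dim E+\dim F\ge \dim X+\ell(R)-1$, so either $E$ is a divisor mapped to a surface with fibres $\mathbb{P}^1$ (conic-bundle type), or $E$ is contracted to a curve or a point, with $F$ isomorphic to a (possibly singular) quadric or to $\mathbb{P}^2$, or $E$ is contracted to a point. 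In the small case, Kawamata's classification for fourfolds shows that $E$ is a disjoint union of $\mathbb{P}^2$'s with normal bundle $\mathcal{O}(-1)^{\oplus 2}$.

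With this picture, the deformation argument is case by case. In the small case, $F\cong\mathbb{P}^2$ with $N_{F/X}=\mathcal{O}(-1)^2$. Then $H^1(F,N_{F/X})=0$, and the relative normal bundle in $\mathcal{X}$ fits into
\[
0\to N_{F/X}\to N_{F/\mathcal{X}}\to\mathcal{O}_F\to 0 .
\]
This gives $H^1(N_{F/\mathcal{X}})=0$ and $h^0=1$. So the relative Hilbert (Douady) space of $\mathcal{X}/\Delta$ is smooth of dimension $1$ at $[F]$ and dominates $\Delta$, which means $F$ deforms. When $E$ is a divisor contracted to a surface, I would deform the general fibre $\ell\cong\mathbb{P}^1$ instead. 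Here $-K_X\cdot\ell=1$, and $N_{\ell/X}=\mathcal{O}\oplus\mathcal{O}\oplus\mathcal{O}(-1)$ gives $H^1=0$, so the same Douady-space argument applies. In the remaining cases ($F$ a surface or threefold inside $E$ with $E$ divisorial), I would first deform the whole family of rational curves in $R$. The space of minimal rational curves in $R$ through a general point of $E$ has dimension at least $-K_X\cdot C+\dim\mathcal{X}-3$ in the relative Douady space of $\mathcal{X}$, by Mori's bend-and-break dimension estimate applied to $\mathcal{X}\to\Delta$; this exceeds the dimension of the family inside $X$ by one. Hence these curves extend to nearby fibres, and their loci in nearby fibres sweep out deformations $F_s$ of $F$.

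The main obstacle is the divisorial case where $E$ contracts to a point or a curve, with $F$ a singular quadric or a non-normal surface. Here $H^1(N_{F})$ need not vanish, and the Kähler setting forbids using the relative cone of a projective family. My first attempt would be the rational-curve approach above: take a minimal dominating family $V$ of rational curves on $F$, show via the lower bound on $\dim_{[C]}\mathrm{Douady}(\mathcal{X}/\Delta)$ that every component of $V$ surjects onto a neighbourhood of $t$, and then take $F_s$ to be the locus of the deformed curves. Ampleness of $-K_{\mathcal{X}_s}|_{F_s}$ for $s$ near $t$ would follow by openness of ampleness in flat families.
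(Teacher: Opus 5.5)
Your small case and your treatment of a divisor contracted onto a surface agree with the paper: there $H^1(F,N_{F/\mathcal{X}_t})=0$, and the relative Douady space is smooth over $\Delta$ at $[F]$. The gap is in the cases where the exceptional divisor is contracted to a curve or to a point, where you switch to deforming minimal rational curves. Your count forces the curves out of $\mathcal{X}_t$ only if their family inside $\mathcal{X}_t$ has the expected dimension $-K_{\mathcal{X}_t}\cdot C+\dim\mathcal{X}_t-3$. Only then does the bound $-K\cdot C+\dim\mathcal{X}-3$ exceed it by one. This fails in cases that actually occur.

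Take $E\cong\mathbb{P}^3$ with $N_{E/\mathcal{X}_t}\cong\mathcal{O}(-3)$. Then $-K_{\mathcal{X}_t}|_E\cong\mathcal{O}(1)$, lines satisfy $-K\cdot\ell=1$, and $N_{\ell/\mathcal{X}_t}\cong\mathcal{O}(1)^{\oplus 2}\oplus\mathcal{O}(-3)$. Your bound is $3$, but the lines of $E$ already form a $4$-dimensional family inside $\mathcal{X}_t$. The same problem occurs in three further cases:
\begin{itemize}
\item $E\cong\mathbb{P}^3$ with $\mathcal{O}(-2)$: bound $4$, family $4$;
\item $E$ a quadric threefold with $\mathcal{O}_E(-2)$: bound $3$, family $3$;
\item type $(3,1)$ with $\mathbb{P}^2$-fibres and $N_{D/\mathcal{X}_t}|_F\cong\mathcal{O}(-2)$: bound $3$, family $3$.
\end{itemize}
In all of these $H^1(\ell,N_{\ell/\mathcal{X}_t})\neq 0$. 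The Euler-characteristic bound is then no larger than the dimension of the family in the fibre, so nothing rules out the relevant component of the relative space lying over $t$ alone. The step ``hence these curves extend to nearby fibres'' is therefore unjustified exactly where you need it.

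The missing idea is to deform $F$ itself. Your worry that $H^1$ of its normal bundle need not vanish is unfounded. For a general fibre $F$ of $D\to B$ one has $N_{F/D}\cong\mathcal{O}_F^{\oplus\dim B}$. The sequence $0\to N_{F/D}\to N_{F/\mathcal{X}_t}\to N_{D/\mathcal{X}_t}|_F\to 0$ splits because $H^1(F,\mathcal{O}_F(k))=0$ for $k>0$. Hence $N_{F/\mathcal{X}_t}\cong\mathcal{O}_F(-k)\oplus\mathcal{O}_F^{\oplus\dim B}$ with $k\in\{1,2,3\}$, where $F$ is $\mathbb{P}^2$, $\mathbb{P}^3$, or a quadric hypersurface. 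The vanishing in Theorem~\ref{Cohomology vanishing of degree 2 hypersurface THM}, namely $H^1(F,\mathcal{O}_F(m))=0$ for $m>-(n-1)$, does not require normality and covers every twist that occurs, including $m=0$. So $H^1(F,N_{F/\mathcal{X}_t})=0$, and $F$ deforms by the same Douady-space argument you used in the small case.

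In the $\mathcal{O}(-3)$ example, $E$ is rigid and unobstructed, so it does deform and carries its lines along. Your dimension count simply cannot detect this.

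You also omit the del Pezzo exceptional divisor with $F|_F\equiv K_{\mathcal{X}_t}|_F$. There the paper proves two vanishings:
\begin{itemize}
\item $H^1(F,\mathcal{O}_F)=0$, from relative Kawamata--Viehweg vanishing;
\item $H^1(F,\mathcal{O}_F(F))=0$, by writing $-K_{\mathcal{X}_t}+F\simeq f^*M$ (Lemma~\ref{relatively numerical trivial line bundle is the pull back of the line bundle on the base LEM}) and applying Grauert--Riemenschneider to $R^1f_*K_{\mathcal{X}_t}\otimes M$.
\end{itemize}
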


When this paper was almost finished, the paper \cite{li2026deformationinvariancecanonicalnefness} was posted on the arXiv which gives a positive answer for K\"ahler morphisms using different arguments. Note that in general the deformation
$\pi$ is not a K\"ahler morphism.

\subsection*{Acknowledgements:} 
\noindent The author wishes to express her sincere gratitude to her advisor Andreas H\"oring for his guidance, encouragement and fruitful discussions.
	\section{Preliminaries}

\noindent We first introduce some notations which are used in the cone theorem and contraction theorem in the K{\"a}hler setting 
\begin{definition}\label{Bott chern cohomology DEF}
	Let $X$ be an irreducible and reduced complex space. Let $\mathcal{H}_X$ be the sheaf of real parts of holomorphic functions multiplied with  $i$. A $(1,1)$-form(resp. $(1,1)$-current) with local potentials on $X$ is a global section of the quotient sheaf $\mathcal{A}_{X}^0/\mathcal{H}_X$(resp. $\mathcal{D}_X/\mathcal{H}_X$), we define the Bott-Chern cohomology 
	\[
	H^{1,1}_{\text{BC}}(X):=H^1(X,\mathcal{H}_X).
	\]
\end{definition}

\begin{definition}\label{N_1(X) and overline{NA}(X) DEF}
	Let $X$ be a normal compact K{\"a}hler space. We define $N_1(X)$ to be the vector space of real closed currents of bidimenison $(1,1)$ modulo the following equivalent relation: $T_1\equiv T_2$ if and only if
	\[
	T_1(\eta)=T_2(\eta)
	\]
	for all real closed $(1,1)$-forms $\eta$ with local potentials.
	We define $\overline{NA}(X)\subset N_1(X)$ to be the closed cone generated by the classes of possitive currents.
\end{definition}

\begin{definition}\cite[Chap.\Romannum{2}.Def.2.11]{zbMATH00833161}\label{Deformations of rational curves DEF}
	Let $X$ be a compact K{\"a}hler manifold. Let $\text{Hom}_{bir}(\mathbb{P}^1,X)$ be the space of birational morphisms from $\mathbb{P}^1$ to $X$. Let 
	\[
	\text{Hom}_{bir}(\mathbb{P}^1,X)=\cup_{i}W_i
	\]
	be the decomposition into irreducible subschemes. Let $\overline{V}_i\subset\text{Chow}(X)$ be the image closure of the image of $W_i$. Let $V_i\subset \overline{V}_i$ be the open subset parametrizing irreducible $1$-cycles. We define the space of rational curves on $X$ as
	\[
	\text{RatCurves}^n(X)=\cup_{i}{V_i}^n.
	\]
	The superscript $n$ represent the normalization.
\end{definition}

\noindent We have the following Corollary of \cite[Cor.3.2, Cone Theorem]{arXiv:2404.12007}.
\begin{corollary}\label{cone theorem without pair for Kahler 4-fold COR}
	Let $X$ be a four dimensional compact K{\"a}hler manifold. Assume the canonical line bundle $K_X$ is pseudo-effective. Then there are at most countably many rational curves $\Gamma_j$ such that
	\[
	0<-K_{X}\cdot\Gamma_j\leq 8
	\]
	for all $j\in J$ and 
	\[
	\overline{NA}(X)=\overline{NA}(X)_{{K_X}_{\geq0}}+
	\displaystyle\sum_{j\in J}\mathbb{R}^{+}[\Gamma_j].
	\]
	Moreover, the irreducible component $V_i\subset\text{RatCurves}^n(X)$ which contains $[\Gamma_j]$ has dimension at least two for every $j\in J$.
\end{corollary}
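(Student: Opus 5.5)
The plan is to specialise the cone theorem \cite[Cor.3.2]{arXiv:2404.12007} to the pair $(X,0)$, and then obtain the dimension statement from the standard lower bound on the dimension of the space of morphisms from $\mathbb{P}^1$ (Mori's estimate). The first part is purely formal. The second part uses deformation theory of maps, which works in the analytic category: the Hom scheme is replaced by the corresponding Douady space, and $\text{Chow}(X)$ by the Barlet cycle space.

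\emph{Step 1: the cone decomposition.} Since $X$ is a compact K\"ahler manifold, the pair $(X,0)$ is klt (indeed terminal), and $K_X$ is pseudo-effective by hypothesis. So \cite[Cor.3.2]{arXiv:2404.12007} applies with empty boundary. It yields an at most countable family of rational curves $\Gamma_j$, $j\in J$, with
\[
\overline{NA}(X)=\overline{NA}(X)_{K_X\geq 0}+\sum_{j\in J}\mathbb{R}^{+}[\Gamma_j]
\]
and $0<-K_X\cdot\Gamma_j\leq 2\dim X=8$. I will check that the numerical bound in the cited result is the usual $2n$ bound, so that it specialises to $8$ here. Because $X$ is smooth, $K_X$ is Cartier, so each $-K_X\cdot\Gamma_j$ is a positive integer, and in particular $-K_X\cdot\Gamma_j\geq 1$.

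\emph{Step 2: the dimension estimate.} For each $j$, let $f_j:\mathbb{P}^1\to X$ be the normalisation of $\Gamma_j$, which is birational onto its image. Then $[f_j]\in \text{Hom}_{bir}(\mathbb{P}^1,X)$. By the deformation theory of morphisms \cite[Chap.\Romannum{2}, Thm.1.2]{zbMATH00833161}, which is local-analytic and so holds verbatim for the Douady space of a compact complex manifold, every irreducible component $W_i$ through $[f_j]$ satisfies
\[
\dim W_i\geq h^0(\mathbb{P}^1,f_j^*T_X)-h^1(\mathbb{P}^1,f_j^*T_X)=\chi(f_j^*T_X)=-K_X\cdot\Gamma_j+4.
\]
The last equality is Riemann--Roch on $\mathbb{P}^1$ for a rank-$4$ bundle.

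\emph{Step 3: passing to rational curves.} The map $W_i\to V_i^n\subset\text{RatCurves}^n(X)$ sends a birational map to its image cycle. Its fibres are exactly the orbits of $\text{Aut}(\mathbb{P}^1)$ acting by reparametrisation, and these orbits have dimension $3$ because the maps are birational onto their image. Therefore
\[
\dim V_i\geq -K_X\cdot\Gamma_j+4-3\geq 2.
\]

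The main obstacle I expect is not the numerics but the foundational transfer of \cite[Chap.\Romannum{2}]{zbMATH00833161} to the non-algebraic setting. Two points need care. First, $\text{RatCurves}^n(X)$ should be defined via the Barlet space, and $\text{Hom}_{bir}(\mathbb{P}^1,X)$ has at most countably many irreducible components; here the K\"ahler hypothesis gives properness of the components of the cycle space with bounded volume. Second, one must justify that the fibre dimension of $W_i\to V_i$ is exactly $3$ generically, which I would argue from the fact that the stabiliser of a birational map is trivial.
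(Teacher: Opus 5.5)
Your proposal is correct and follows the paper's proof essentially step for step. You specialise \cite[Cor.3.2]{arXiv:2404.12007} to the empty boundary, bound the component of $\text{Hom}_{bir}(\mathbb{P}^1,X)$ through $[f_j]$ from below by $\chi(\mathbb{P}^1,f_j^*T_X)=-K_X\cdot\Gamma_j+4\geq 5$, and then subtract the $3$-dimensional fibres of $\text{Hom}_{bir}(\mathbb{P}^1,X)\to\text{RatCurves}^n(X)$; the paper gets those fibres by citing \cite[Chap.\Romannum{2}.Thm.2.15]{zbMATH00833161}, where you use $\text{Aut}(\mathbb{P}^1)$-orbits. Your remarks on the integrality of $-K_X\cdot\Gamma_j$ and on transferring Koll\'ar's Hom-scheme results to the Douady/Barlet setting spell out points the paper leaves implicit.
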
 
\begin{proof}
	Since the line bundle $K_X$ is pseudo-effective, we take the boundary $B+\beta_X$ in \cite[Cor.3.2, Cone Theorem]{arXiv:2404.12007} to be trivial and conclude the result.
	
	\bigskip
	
	\noindent Let $j$ be an arbitrary member in $J$. Let $f$ be the birational morphism from $\mathbb{P}^1$ to $\Gamma_j\subset\mathcal{X}_t$. Let $Hom_{\text{bir},[f]}(\mathbb{P}^1,X)$ be the irreducible subvariety in $Hom_{\text{bir}}(\mathbb{P}^1,X)$ which contains $[f]$.
	By the Riemann-Roch formula, we know:
	\begin{align*}
		\text{dim}(Hom_{\text{bir},[f]}(\mathbb{P}^1,X))
		&\geq\chi(\mathbb{P}^1,f^* T_X)\\
		&=\text{det}(f^*T_X)+\text{dim}(X)\\
		&=-K_{X}\cdot\Gamma_j+\text{dim}(X)\\
		&\geq 5
	\end{align*}
	By \cite[Chap.\Romannum{2}.Thm.2.15]{zbMATH00833161}, the morphism
	\[
	Hom_{\text{bir}}(\mathbb{P}^1,X)\longrightarrow \text{RatCurves}^n(X)
	\] 
	is smooth of relative dimension three with connected 
	fibers. Hence, the irreducible component which contains $[\Gamma_j]$ has dimension at least two.
\end{proof}

\begin{lemma}\cite[Prop.8.1.(b)]{zbMATH06541951}\label{relatively numerical trivial line bundle is the pull back of the line bundle on the base LEM}
	Let $X$ be a a compact K{\"a}hler fourfold. Let $\mathbb{R}^{+}[\Gamma_i]$ be a $K_X$-negative extremal ray in $\overline{NA}(X)$. Suppose that there exists a morphism $\varphi:X\longrightarrow Y$ onto a normal complex space such that $-K_X$ is $\varphi$-ample and a curve $C\subset X$ is contracted if and only if $[C]\in\mathbb{R}^{+}[\Gamma_i]$. Then we have the exact sequence:
	\[
	\begin{tikzcd}[column sep=normal]
		0\arrow[r]&\text{Pic}(Y)\arrow[r,"\varphi^*"]&\text{Pic}(X)\arrow[black]{r}{[L]\mapsto L\cdot\Gamma_i}&\mathbb{Z}
	\end{tikzcd}
	\]
\end{lemma}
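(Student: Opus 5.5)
We prove the exactness in three steps: injectivity of $\varphi^*$, the easy inclusion $\text{Im}(\varphi^*)\subset\ker$, and the reverse inclusion, which is the real content.

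\textbf{Injectivity of $\varphi^*$.} Since $\varphi$ is a contraction onto a normal space, the fibers are connected and $\varphi_*\mathcal{O}_X=\mathcal{O}_Y$. Hence the projection formula gives $\varphi_*\varphi^*M\cong M$ for every line bundle $M$ on $Y$. So if $\varphi^*M\cong\mathcal{O}_X$, then $M\cong\varphi_*\mathcal{O}_X=\mathcal{O}_Y$.

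\textbf{The inclusion $\text{Im}(\varphi^*)\subset\ker$.} Curves in the class $\Gamma_i$ are contracted by $\varphi$. Therefore $\varphi^*M\cdot\Gamma_i=M\cdot\varphi_*\Gamma_i=0$.

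\textbf{The reverse inclusion.} Let $L\in\text{Pic}(X)$ with $L\cdot\Gamma_i=0$. Every curve contracted by $\varphi$ has class in $\mathbb{R}^+[\Gamma_i]$, so $L$ is $\varphi$-numerically trivial. Since $-K_X$ is $\varphi$-ample, both $L$ and $L-K_X$ are $\varphi$-nef and $\varphi$-big relative to $K_X$. More precisely, $mL-K_X$ is $\varphi$-ample for every $m\in\mathbb{Z}$.
\begin{enumerate}
\item Apply the relative Kawamata--Viehweg vanishing and the relative basepoint-free theorem for the projective (over small open sets of $Y$) morphism $\varphi$. This is available because $-K_X$ is $\varphi$-ample, so $\varphi$ is locally projective over $Y$; in the Kähler fourfold setting one would quote the relative versions used in the contraction theorem. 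The conclusion is that $mL$ and $(m+1)L$ are $\varphi$-globally generated for $m\gg0$.
\item Being $\varphi$-numerically trivial and $\varphi$-globally generated, each of $mL$ and $(m+1)L$ is trivial on every fiber. It follows that $mL=\varphi^*M_m$ and $(m+1)L=\varphi^*M_{m+1}$, where $M_k=\varphi_*(kL)$ is a line bundle.
\item Taking the difference gives $L=\varphi^*(M_{m+1}\otimes M_m^{-1})$.
\end{enumerate}

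The main obstacle is justifying the relative basepoint-free theorem in this analytic Kähler setting. Concretely, one has to show that a $\varphi$-numerically trivial line bundle is trivial on a neighbourhood $\varphi^{-1}(U)$ of each fiber. The plan is as follows.

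First, use the vanishing $R^j\varphi_*(L)=0$ for $j>0$, which holds because $L-K_X$ is $\varphi$-ample, together with the same vanishing for $\mathcal{O}_X$. Then, on a Stein neighbourhood $U$ of a point $y\in Y$, show that $\varphi_*L$ is invertible near $y$. For this, use the theorem on formal functions and the fact that $L$ restricted to each infinitesimal neighbourhood of the fiber is trivial. The latter follows from the vanishing $H^1(\varphi^{-1}(y),\mathcal{O})=0$ and the rational connectedness of the fibers of a $K$-negative contraction, which makes numerically trivial bundles on them trivial.

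Finally, glue these local trivializations over $Y$ to obtain $M$ with $L\cong\varphi^*M$.
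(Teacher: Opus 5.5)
Your argument is correct and is essentially the proof behind the citation. The paper gives no proof of its own and simply quotes \cite[Prop.~8.1(b)]{zbMATH06541951}, which rests on the same Koll\'ar--Mori-style argument you reproduce: injectivity from $\varphi_*\mathcal{O}_X=\mathcal{O}_Y$ (connected fibres are tacitly assumed in the statement, as you assume), and the reverse inclusion by making $mL$ and $(m+1)L$ relatively globally generated via the relative basepoint-free theorem for the locally projective morphism $\varphi$. Your closing ``fallback'' is not needed, because relative Kawamata--Viehweg vanishing and the relative basepoint-free theorem are known for projective morphisms of complex analytic spaces (Nakayama, Fujino). As sketched it would not suffice anyway: $H^1$-vanishing plus rational chain connectedness of a possibly singular, reducible fibre only makes a numerically trivial bundle torsion, not trivial, and handling that torsion is exactly what the $mL$ versus $(m+1)L$ trick does.
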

\begin{theorem}\label{Cohomology vanishing of degree 2 hypersurface THM}
	Let $X\subset\mathbb{P}^n,\;n\geq 3$ be a hypersurface of degree two which is not necessarily normal. Then we have 
	\[
	H^1(X,\mathcal{O}_{\mathbb{P}^n}(m))=0,\;for \, all\, m>-(n-1).
	\]
\end{theorem}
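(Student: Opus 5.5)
Let $X=Q(x_0,\dots,x_n)\subset\mathbb{P}^n$ be the quadric, with $Q$ a nonzero quadratic form. I interpret $H^1(X,\mathcal{O}_{\mathbb{P}^n}(m))$ as $H^1(X,\mathcal{O}_X(m))$, where $\mathcal{O}_X(m)=\mathcal{O}_{\mathbb{P}^n}(m)|_X$. The plan is to use the ideal sheaf sequence of the hypersurface and reduce everything to the cohomology of line bundles on $\mathbb{P}^n$, which is completely known. Normality plays no role, since $X$ is a Cartier divisor in $\mathbb{P}^n$ whatever its singularities are (reducible or non-reduced quadrics are allowed).

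First I will write down the exact sequence
\[
0\longrightarrow \mathcal{O}_{\mathbb{P}^n}(m-2)\xrightarrow{\;\cdot Q\;}\mathcal{O}_{\mathbb{P}^n}(m)\longrightarrow \mathcal{O}_X(m)\longrightarrow 0 ,
\]
which is valid because $\mathcal{I}_X\cong\mathcal{O}_{\mathbb{P}^n}(-2)$. Taking cohomology gives the exact segment
\[
H^1(\mathbb{P}^n,\mathcal{O}_{\mathbb{P}^n}(m))\longrightarrow H^1(X,\mathcal{O}_X(m))\longrightarrow H^2(\mathbb{P}^n,\mathcal{O}_{\mathbb{P}^n}(m-2)).
\]

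Next I will apply Serre's computation of $H^i(\mathbb{P}^n,\mathcal{O}(k))$: these groups vanish for $0<i<n$ and every $k$. Because $n\geq 3$, both $i=1$ and $i=2$ lie strictly between $0$ and $n$, so both outer terms are zero. Hence $H^1(X,\mathcal{O}_X(m))=0$ for every $m\in\mathbb{Z}$, which in particular covers the stated range $m>-(n-1)$.

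The only point that needs care is the hypothesis $n\geq3$. It is exactly what guarantees $2<n$, so that $H^2$ of a line bundle on $\mathbb{P}^n$ vanishes. For $n=2$ the conclusion genuinely fails in some degrees, for example when $X$ is a conic. I therefore do not expect a real obstacle. The bound $m>-(n-1)$ is not needed, though it is harmless; presumably it is stated in the form in which the result is used later in the paper, for instance for $X$ a fibre of a contraction that is a quadric, with $m$ tied to the anticanonical degree.
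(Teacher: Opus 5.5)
Your proof is correct and is the paper's argument: twist the sequence $0\to\mathcal{O}_{\mathbb{P}^n}(m-2)\to\mathcal{O}_{\mathbb{P}^n}(m)\to\mathcal{O}_X(m)\to 0$ and use the vanishing of $H^1$ and $H^2$ of line bundles on $\mathbb{P}^n$ for $n\geq 3$ in the long exact sequence. You are also right that the restriction $m>-(n-1)$ is not needed, since the argument gives $H^1(X,\mathcal{O}_X(m))=0$ for every $m\in\mathbb{Z}$.
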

\begin{proof}
	
	We twist the following short exact sequence with $\mathcal{O}_{\mathbb{P}^n}(m)$
	\[
	\begin{tikzcd}[column sep=small]
		0\arrow[r]&\mathcal{O}_{\mathbb{P}^n}(-2)\arrow[r]&\mathcal{O}_{\mathbb{P}^n}\arrow[r]&\mathcal{O}_{X}\arrow[r]&0,
	\end{tikzcd}
	\]
	and get
	\[
	\begin{tikzcd}[sep=scriptsize]
		0\arrow[r]&\mathcal{O}_{\mathbb{P}^n}(m-2)\arrow[r]&\mathcal{O}_{\mathbb{P}^n}(m)\arrow[r]&\mathcal{O}_{X}(m)\arrow[r]&0.
	\end{tikzcd}
	\]
	We apply well-known vanishing results for line bundles on $\mathbb{P}^n$ and the long exact sequence in cohomology.
\end{proof}
	\section{Basic Fact}
\begin{proposition}\label{nef supporting class for extremal ray PRO}
	Let $X$ be a four dimensional compact K{\"a}hler manifold. Assume the canonical line bundle $K_X$ is pseudo-effective but not nef. Let $\{\Gamma_{i}\}_{i\in I}$ be at most countably many rational curves in $X$ such that
	\[
	0<-K_{X}\cdot\Gamma_i\leq 8
	\]
	for all $i\in I$ and 
	\[
	\overline{NA}(X)=\overline{NA}(X)_{{K_X}_{\geq0}}+
	\displaystyle\sum_{i\in I}\mathbb{R}^{+}[\Gamma_i].
	\]
	Let $\Gamma_{i_0}$ be an  arbitrary member in $\{\Gamma_{i}\}_{i\in I}$. Then there exists a nef class $\alpha\in N^1(X)$ such that 
	\[
	\mathbb{R}^+[\Gamma_{i_0}]=\{z\in\overline{NA}(X)\text{ }|\text{ }\alpha\cdot z=0\},
	\]
	and the class $\alpha$ is strictly positive on
	\[
	\biggl(\overline{NA}(X)_{{K_{X}}_{\geq0}}+
	\displaystyle\sum_{i\in I,i\neq i_0}\mathbb{R}^{+}[\Gamma_i]\biggl)\setminus\{0\}
	\]
	We call $\alpha$ a nef supporting class for the extremal ray $\mathbb{R}^+[\Gamma_{i_0}]$.
\end{proposition}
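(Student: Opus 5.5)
We follow the standard argument from the projective cone theorem, transplanted to the finite-dimensional real vector spaces $N^1(X)$ and $N_1(X)$, which are dual under the intersection pairing. Fix a Kähler class $\omega$ on $X$. Since $\omega$ is strictly positive on $\overline{NA}(X)\setminus\{0\}$, the slice $S=\{z\in\overline{NA}(X)\mid \omega\cdot z=1\}$ is compact.

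The first step is to show that the rays $\mathbb{R}^+[\Gamma_i]$ accumulate only on the hyperplane $K_X^{\perp}$. By Corollary \ref{cone theorem without pair for Kahler 4-fold COR} we have $0<-K_X\cdot\Gamma_i\le 8$. It suffices to prove that for every $\varepsilon>0$ only finitely many $i$ satisfy $(K_X+\varepsilon\omega)\cdot\Gamma_i<0$. This holds because such $\Gamma_i$ then satisfy $\omega\cdot\Gamma_i<8/\varepsilon$, and curves of bounded $\omega$-degree form a bounded family: the Barlet space/Douady space components of bounded volume are compact (Bishop's theorem). Only finitely many numerical classes occur in a compact family, so the claim follows.

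Consequently, for small $\varepsilon>0$, define
\[
W_\varepsilon=\overline{NA}(X)_{(K_X+\varepsilon\omega)\ge 0}+\sum_{i\neq i_0,\,(K_X+\varepsilon\omega)\cdot\Gamma_i<0}\mathbb{R}^+[\Gamma_i].
\]
Every ray $\mathbb{R}^+[\Gamma_i]$ with $i\neq i_0$ not listed in the finite sum satisfies $(K_X+\varepsilon\omega)\cdot\Gamma_i\ge0$. Using the cone decomposition, one checks that
\[
W:=\overline{NA}(X)_{K_X\ge0}+\sum_{i\neq i_0}\mathbb{R}^+[\Gamma_i]\subseteq W_\varepsilon.
\]
The set $W_\varepsilon$ is closed: it is a closed cone plus finitely many rays, and all of it lies in the pointed cone $\overline{NA}(X)$.

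Next, choose $\varepsilon$ small enough that $(K_X+\varepsilon\omega)\cdot\Gamma_{i_0}<0$, so that $[\Gamma_{i_0}]\notin W_\varepsilon$. The main obstacle is to ensure this, which requires $\mathbb{R}^+[\Gamma_{i_0}]$ to be extremal and not a positive combination of the others. Here we read the statement as assuming that the $\Gamma_i$ span distinct extremal rays, which is how they arise in the cone theorem, and we discard redundant rays. With this assumption, $\overline{NA}(X)=W_\varepsilon+\mathbb{R}^+[\Gamma_{i_0}]$, and $W_\varepsilon$ is a closed convex cone not containing $[\Gamma_{i_0}]$. It follows that $\mathbb{R}^+[\Gamma_{i_0}]$ is an exposed edge of $\overline{NA}(X)$.

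By the separation theorem for closed convex cones in finite dimensions, the dual cone $W_\varepsilon^\vee$ has nonempty interior relative to the condition $\ell(\Gamma_{i_0})=0$. Concretely, we choose a linear form $\ell$ with $\ell(\Gamma_{i_0})=0$ and $\ell>0$ on $W_\varepsilon\setminus\{0\}$. Such an $\ell$ exists because the image of $W_\varepsilon$ in the quotient $N_1(X)/\mathbb{R}[\Gamma_{i_0}]$ is a closed pointed cone. Closedness holds since $W_\varepsilon\cap\mathbb{R}[\Gamma_{i_0}]=0$ and $-[\Gamma_{i_0}]\notin W_\varepsilon$, both of which follow from the pointedness of $\overline{NA}(X)$.

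Finally, identify $\ell$ with a class $\alpha\in N^1(X)$ via the duality $N^1(X)\cong N_1(X)^*$. The class $\alpha$ is nonnegative on $\overline{NA}(X)=W_\varepsilon+\mathbb{R}^+[\Gamma_{i_0}]$, so it is nef, since nefness is dual to $\overline{NA}(X)$ for Kähler manifolds. It vanishes exactly on $\mathbb{R}^+[\Gamma_{i_0}]$, because any $z$ with $\alpha\cdot z=0$ has $W_\varepsilon$-component zero. It is strictly positive on $W\setminus\{0\}\subseteq W_\varepsilon\setminus\{0\}$, as required.
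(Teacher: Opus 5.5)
Your proof is correct and follows the paper's route. First you show that the cone generated by everything except $\mathbb{R}^+[\Gamma_{i_0}]$ is closed; you do this for the slightly larger cone $W_\varepsilon\supseteq W$, using finiteness of the classes of bounded $\omega$-degree, which is what the paper gets from H\"oring--Peternell, Lemma 6.1. Then you find a linear form vanishing on $[\Gamma_{i_0}]$ and positive on that cone (your quotient argument reproves the cited Debarre lemma). Finally you pass to a nef class via $N^1(X)\cong N_1(X)^*$ and the duality between the nef cone and $\overline{NA}(X)$, which the paper takes from H\"oring--Peternell, Proposition 3.9.

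Your added hypothesis that the $\Gamma_i$ span pairwise distinct extremal rays is genuinely needed: for a redundant $\Gamma_{i_0}$ the statement is false, and the paper uses this hypothesis silently when applying Debarre's lemma. One correction: it is this extremality, together with $(K_X+\varepsilon\omega)\cdot\Gamma_{i_0}<0$, that gives $[\Gamma_{i_0}]\notin W_\varepsilon$. Pointedness of $\overline{NA}(X)$ only rules out $-[\Gamma_{i_0}]\in W_\varepsilon$.
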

\begin{proof}
	We set 
	\[
	V:= \overline{NA}(X)_{{K_{X}}_{\geq0}}+
	\displaystyle\sum_{i\in I,i\neq i_0}\mathbb{R}^{+}[\Gamma_{i}],
	\]
	We have
	\[
	\overline{NA}(X)=V+\mathbb{R}^{+}[\Gamma_{i_0}]
	\]
	Since we have
	\[
	0<-K_{X}\cdot\Gamma_{i}\leq8,
	\] 
	by \cite[Lem.6.1]{zbMATH06541951},
	the cone $V$ is closed. 
	Therefore, there exists a linear form on $N_1(X)$ which vanishes on $R$ and is positive on $V\setminus \{0\}$ by \cite[Lem. 6.7(d)]{zbMATH01634463}. This linear form gives the class $\alpha$ by \cite[Prop.3.9]{zbMATH06541951}.
\end{proof}

\begin{definition}\label{Small and divisorial ray DEF}
	Let $X$ be a four dimensional compact K{\"a}hler manifold. Let $\mathbb{R}^+[\Gamma_{i}]$ be an extremal ray in $\overline{NA}(X)$. We set $\cup_{C\in \mathbb{R}^+[\Gamma_{i}]}C$ to be a union of all curves which are contained in the extremal ray $\mathbb{R}^+[\Gamma_{i}]$.  We define $\mathbb{R}^+[\Gamma_{i}]$ is small if every connected component of $\cup_{C\in \mathbb{R}^+[\Gamma_{i}]}C$ has codimension at least two and it is divisorial if  there exists a connected component of $\cup_{C\in \mathbb{R}^+[\Gamma_{i}]}C$ is codimensional one.
\end{definition}

\begin{definition}\label{Generical contraction exists DEF}
	Let $X$ be a four dimensional compact K{\"a}hler manifold with $K_X$ pseudo-effective but not nef. We fix a $K_X$-negative extremal ray $\mathbb{R}^+[\Gamma_{i_0}]$. Let $\alpha$ be the nef supporting class of $\mathbb{R}^+[\Gamma_{i_0}]$.  We say that a contraction for $\mathbb{R}^+[\Gamma_{i_0}]$ exists  when the following properties hold:
	\begin{itemize}
		\item[1.] If the extremal ray $\mathbb{R}^+[\Gamma_{i_0}]$ is divisorial, the null locus $\text{Null}(\alpha)$ is a prime Cartier divisor $D$.  Moreover, the following holds:
		\begin{itemize}
			\item[1.1] There is a proper holomorphic fibration $\varphi: D\rightarrow B$ from $D$ to a normal analytic space $B$. The divisor $-D$ is $\varphi$-ample.
			\item[1.2] 	 There is a bimeromorphic morphism $f$ from $X$ to a normal complex space $Y$ such that $f|_{X\setminus D}$ is an isomorphism and $f|_{D}=\varphi$. 
		\end{itemize} 
		\item[2.] If the extremal ray $\mathbb{R}^+[\Gamma_{i_0}]$ is small, there exists a bimeromorphic morphism from $X$ to a normal complex space $Y$ such that $f(\text{Null}(\alpha))$ is a finite set of points and $f|_{X\setminus \text{Null}(\alpha)}$ is an isomorphism.
		\item[3.] The bimeromorphic map $f$ on $X$ satisfies:
		\begin{itemize}
			\item[3.1.] $f$ has connected fibres;
			\item[3.2.] the anti canonical line bundle $-K_X$ is $f$-ample;
			\item[3.3.] a curve $C\subset X$ is contracted by $f$ if and only if $[C]\subset\mathbb{R}^+[\Gamma_{i_0}]$.
		\end{itemize}
	\end{itemize}
	
\end{definition}

\begin{definition}\label{Type introduction of generic contraction DEF}
	Let $X$ be a four dimensional compact K{\"a}hler manifold with $K_X$ pseudo-effective but not nef. We fix a $K_X$-negative extremal ray $\mathbb{R}^+[\Gamma_{i_0}]$. Let $\alpha$ be the nef supporting class of $\mathbb{R}^+[\Gamma_{i_0}]$.  Assume there exists a divisorial contraction $f:X\longrightarrow Y$ for $\mathbb{R}^+[\Gamma_{i_0}]$. We say $f$ is of the type $(3,n)$ if the base $B$ is n-dimensional.
\end{definition}

The following result is a special case of \cite[Thm.1.4]{hacon2026kahlermmptranscendentalbasepointfree}. In my forthcoming thesis, I will present a self-contained proof based on the analysis of the geometry of the null locus of the nef supporting class:

\begin{theorem}\label{Generic contraction exists for 4-fold THM}
	Let $X$ be a four dimensional compact K{\"a}hler manifold. Assume that the canonical line bundle $K_{X}$ is pseudo-effective and not nef. Let $\mathbb{R}^+[\Gamma_{i_0}]$ be a $K_{X}$-negative extremal ray. Then the contraction of $\mathbb{R}^+[\Gamma_{i_0}]$ exists.
\end{theorem}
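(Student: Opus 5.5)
The plan is to read Theorem \ref{Generic contraction exists for 4-fold THM} off the transcendental basepoint-free/contraction theorem \cite[Thm.1.4]{hacon2026kahlermmptranscendentalbasepointfree}, and then check that the resulting contraction has all the properties listed in Definition \ref{Generical contraction exists DEF}.

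\textbf{Step 1: producing the contraction.} Let $\alpha$ be the nef supporting class of the ray from Proposition \ref{nef supporting class for extremal ray PRO}. Corollary \ref{cone theorem without pair for Kahler 4-fold COR} shows that $K_X$-negative extremal rays are spanned by rational curves with $0<-K_X\cdot\Gamma_{i_0}\le 8$. After rescaling, $\alpha-\epsilon K_X$ is strictly positive on $\overline{NA}(X)\setminus\{0\}$ for small $\epsilon>0$: on the ray it equals $-\epsilon K_X\cdot\Gamma_{i_0}>0$, and on $V$ positivity follows since $V$ is closed and $\alpha>0$ on $V\setminus\{0\}$. Hence $\alpha-\epsilon K_X$ is a K\"ahler class by the Kähler-cone duality for $\overline{NA}(X)$ used in \cite{zbMATH06541951}. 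This is exactly the hypothesis of the basepoint-free theorem in the cited work, applied with trivial boundary as in the proof of Corollary \ref{cone theorem without pair for Kahler 4-fold COR}. It yields a proper surjective morphism $f:X\to Y$ onto a normal compact Kähler space with connected fibres and $\alpha=f^*\alpha_Y$ for a Kähler class $\alpha_Y$ on $Y$.

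\textbf{Step 2: properties 3.1–3.3.} Property 3.1 holds by construction. For 3.3, a curve $C$ is contracted iff $\alpha\cdot C=0$, iff $[C]\in\mathbb{R}^+[\Gamma_{i_0}]$ by Proposition \ref{nef supporting class for extremal ray PRO}. For 3.2, $-K_X$ is positive on every contracted curve and $\overline{NA}(X/Y)=\mathbb{R}^+[\Gamma_{i_0}]$, so the relative Kleiman criterion gives $f$-ampleness. Since $K_X$ is pseudo-effective, $f$ cannot be of fibre type: a general fibre would be covered by $K_X$-negative rational curves, making $X$ uniruled. So $f$ is bimeromorphic, and its exceptional locus is $\operatorname{Null}(\alpha)=\bigcup_{C\in\mathbb{R}^+[\Gamma_{i_0}]}C$.

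\textbf{Step 3: the small and divisorial cases.} In the small case, the exceptional locus has codimension at least two. I would then show $f(\operatorname{Null}(\alpha))$ is finite by excluding one-dimensional images. The idea is to use Corollary \ref{cone theorem without pair for Kahler 4-fold COR}: the family of rational curves through a point has dimension at least two, so the fibres of $f$ have dimension at least two. Ionescu–Wiśniewski-type inequalities then give $\dim E+\dim F\ge 4+\ell-1\ge 4$, where $E$ is the exceptional locus and $F$ a fibre. Combined with $\dim E\le 2$, this forces $\dim F=\dim E=2$ with finite image.

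In the divisorial case, some component of the exceptional locus is a prime divisor $D$. Lemma \ref{relatively numerical trivial line bundle is the pull back of the line bundle on the base LEM} shows $\rho(X/Y)=1$, so every exceptional component has the same sign against $\Gamma_{i_0}$. Then $D\cdot\Gamma_{i_0}<0$ (negativity lemma), so every contracted curve lies in $D$ and $\operatorname{Null}(\alpha)=D$. $D$ is Cartier since $X$ is smooth. Setting $B=f(D)$ and $\varphi=f|_D$, the class $-D$ is $\varphi$-ample because it is positive on the only relative ray.

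The main obstacle is Step 1: checking that the hypotheses of \cite[Thm.1.4]{hacon2026kahlermmptranscendentalbasepointfree} are met by $\alpha$. This requires $\alpha-\epsilon K_X$ to be Kähler, not merely nef and big, which in turn relies on the duality between $\overline{NA}(X)$ and the nef cone. The dimension count excluding a curve as image in the small case is the secondary difficulty.
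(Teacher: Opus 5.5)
Your proposal follows the same route as the paper. The paper gives no argument beyond stating the theorem as a special case of \cite[Thm.1.4]{hacon2026kahlermmptranscendentalbasepointfree}, and your checks of the clauses of Definition~\ref{Generical contraction exists DEF} are a sound elaboration of that citation: K\"ahlerness of $\alpha-\epsilon K_X$, exclusion of fibre type via pseudo-effectivity of $K_X$, the negativity lemma in the divisorial case, and Ionescu--Wi{\'s}niewski in the small case. Only minor tightening is needed: get $f$-ampleness of $-K_X$ by restricting the K\"ahler class $\alpha-\epsilon K_X$ to fibres, where $\alpha=f^*\alpha_Y$ vanishes, rather than by quoting a relative Kleiman criterion, which is not directly available before $f$ is known to be projective; also, your sketch does not address the normality of $B=f(D)$ that the definition requires.
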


\begin{proposition}\label{Pseudo-effective preserves in the deformation PRO}
	Let $\mathcal{X}_0$ be a four dimensional compact Kähler manifold with
	$K_{\mathcal{X}_0}$ nef. 
	Let $\pi:\mathcal{X}\longrightarrow\Delta$ be a deformation of $\mathcal{X}_0$ over a unit disc $\Delta\subset\mathbb{C}$. Let $\mathcal{X}_t$ be an arbitrary fibre of the deformation $\pi:\mathcal{X}\longrightarrow\Delta$. Then the canonical line bundle $K_{\mathcal{X}_t}$  is pseudo-effective.
\end{proposition}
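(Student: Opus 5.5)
The plan is to reduce pseudo-effectivity of $K_{\mathcal{X}_t}$ to the non-uniruledness of $\mathcal{X}_t$, and then show that non-uniruledness propagates from the central fibre. For compact Kähler manifolds of dimension four, $K_X$ is pseudo-effective if and only if $X$ is not uniruled. This is the Brunella / Höring--Peternell type statement, which in dimension four I would take from the Kähler MMP literature cited above (\cite{arXiv:2404.12007}, \cite{hacon2026kahlermmptranscendentalbasepointfree}). For $\mathcal{X}_0$ this gives non-uniruledness, since $K_{\mathcal{X}_0}$ is nef and hence pseudo-effective. Throughout I shrink $\Delta$ so that every fibre is Kähler, by \cite{zbMATH05016783}.

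The key step is that uniruledness is a closed condition in families. Suppose, for contradiction, that $\mathcal{X}_t$ is uniruled for $t$ in some subset $S\subset\Delta$ accumulating at $0$. (This is the form needed for a neighbourhood statement; the ``arbitrary fibre'' in the proposition is read after shrinking $\Delta$.) Since $\pi$ is weakly Kähler by \cite[Thm.6.3]{zbMATH03935470}, the relative Barlet space of cycles $\mathcal{C}(\mathcal{X}/\Delta)$ has components that are proper over $\Delta$ once the volume with respect to a fixed relative Kähler-type form is bounded. Through a general point of a uniruled $\mathcal{X}_t$ there is a rational curve $C_t$ with $-K_{\mathcal{X}_t}\cdot C_t\le 2\dim+2=10$, by Mori's bend-and-break. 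I expect this step to need projectivity, or a replacement via the MRC fibration in the Kähler case. Alternatively, I would use that a uniruled Kähler fourfold carries a covering family of rational curves whose degree with respect to a Kähler class is controlled. Note that $\mathcal{X}_t$ uniruled means $K_{\mathcal{X}_t}$ is not pseudo-effective, so the cone theorem of \cite{arXiv:2404.12007} also provides $K$-negative rational curves of degree at most $8$.

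I would then bound the area $\int_{C_t}\omega_t$ uniformly in $t$, where $\omega_t$ is a smoothly varying family of Kähler forms. This gives countably many components of the relative cycle space covering the uniruled fibres. Some component dominates a set of $t$ accumulating at $0$, so by properness and analyticity it dominates a neighbourhood of $0$. Its limit cycles are connected unions of rational curves covering $\mathcal{X}_0$, so $\mathcal{X}_0$ is uniruled. This contradicts the pseudo-effectivity of $K_{\mathcal{X}_0}$.

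The main obstacle is the uniform area bound. $K$-degree alone does not control $\omega_t$-area, because rational curves with bounded $-K$ degree could have unbounded Kähler area. The first thing I would try is choosing the covering curves as minimal-degree curves with respect to a fixed class $\omega_t$ close to $\omega_0$. Failing that, I would use the known alternative: by Fujiki/Barlet, non-uniruledness is open in smooth Kähler families, and invariance of plurigenera fails here, so I would rely directly on the semicontinuity of uniruledness for weakly Kähler families. Another route is to cite the statement from \cite{arXiv:2510.23967}, where such closedness is used in dimension $\le 3$, and verify that its argument only uses boundedness of the Barlet space under weak Kählerness.
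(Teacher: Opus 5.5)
Your outline matches the paper's proof. Assuming $K_{\mathcal{X}_s}$ is not pseudo-effective, the paper deduces that $\mathcal{X}_s$ is uniruled, transports uniruledness to $\mathcal{X}_0$, and contradicts nefness of $K_{\mathcal{X}_0}$. Two remarks on your inputs.
\begin{itemize}
\item The implication ``$K_X$ not pseudo-effective $\Rightarrow$ $X$ uniruled'' for compact K\"ahler fourfolds is Ou's theorem \cite[Thm.1.1]{arXiv:2501.18088}, which is what the paper uses. In the form used in this paper, the results of \cite{arXiv:2404.12007} and \cite{hacon2026kahlermmptranscendentalbasepointfree} assume $K_X$ pseudo-effective (Corollary \ref{cone theorem without pair for Kahler 4-fold COR}, Theorem \ref{Generic contraction exists for 4-fold THM}). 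So they are not the right source for this implication, nor for $K$-negative curves on a uniruled fibre.
\item The paper does not prove the transport step; it cites invariance of uniruledness, \cite[Chap.~IV, Cor.~1.10]{zbMATH00833161}. Fujiki's K\"ahler version is the more accurate reference there. Your fallback of citing it is therefore exactly the paper's proof.
\end{itemize}

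Your self-contained proof of the transport step does not close, and the obstacle you isolate comes from your set-up. You take limits of unrelated curves $C_{t_n}$ on different fibres $\mathcal{X}_{t_n}$, $t_n\in S$. The relative cycle space has countably many components and $S$ is countable, so nothing forces one component to carry curves over infinitely many $t_n$. Only the uniform area bound, which you rightly doubt, would give the finiteness you need.

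Work with a single family instead. Pick $s\in S$ and a free rational curve $f:\mathbb{P}^1\to\mathcal{X}_s$, a general member of a covering family. The sequence $0\to f^*T_{\mathcal{X}_s}\to f^*T_{\mathcal{X}}\to\mathcal{O}_{\mathbb{P}^1}\to 0$ gives $H^1(\mathbb{P}^1,f^*T_{\mathcal{X}})=0$ and shows that $f^*T_{\mathcal{X}}$ is globally generated. Hence $f$ deforms unobstructedly to all fibres near $s$, and its deformations sweep out a non-empty open subset of $\mathcal{X}$.

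Let $\overline{V}$ be the closure of this family of rational curves in $\text{Chow}(\pi^{-1}(\Delta')/\Delta')$. It lies in a component that is proper over $\Delta'$ by \cite[Prop.2.6]{zbMATH06216389}, the same input as in Proposition \ref{countable many fibres with canonical divisor are not nef PRO}. So the image of its universal family is a closed analytic subset of the connected manifold $\pi^{-1}(\Delta')$ containing an open set, hence all of it. The members of $\overline{V}$ over $0$ are limits of rational curves, hence sums of rational curves, and they cover $\mathcal{X}_0$. Thus $\mathcal{X}_0$ is uniruled, contradicting nefness of $K_{\mathcal{X}_0}$.

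Use the closure of the rational locus, not the whole cycle-space component: a component containing a rational curve can have non-rational general member. With this argument you need no degree bound, bend-and-break or MRC fibration, since properness of one irreducible family replaces the area bound. It also gives the statement on all of $\Delta'$, not just on some smaller neighbourhood of $0$.
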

\begin{proof}
	We argue by contradiction. Assume there is a fibre $\mathcal{X}_s$ such that $K_{\mathcal{X}_s}$ is not pseudo-effective. By \cite[Thm.1.1]{arXiv:2501.18088}, the manifold $\mathcal{X}_s$ is uniruled. Then every fibre of $\pi:\mathcal{X}\longrightarrow\Delta$ is uniruled by \cite[Chap.\Romannum{4}.Cor.1.10]{zbMATH00833161}. Hence, for the center fibre $\mathcal{X}_0$, the canonical divisor $K_{\mathcal{X}_0}$ is not pseudo-effective. Thus it is not nef, a contradiction.
\end{proof}

\begin{proposition}\label{countable many fibres with canonical divisor are not nef PRO}
	Let $\mathcal{X}_0$ be a four dimensional compact Kähler manifold with
	$K_{\mathcal{X}_0}$ nef. 
	Let $\pi:\mathcal{X}\longrightarrow\Delta$ be a deformation of $\mathcal{X}_0$ over a unit disc $\Delta\subset\mathbb{C}$.  There exists a small neighborhood $\Delta'$ of $0$ and a set $Z\subset\Delta'$ containing at most countably many points such that $K_{\mathcal{X}_t}$ is nef for any $t\in\Delta'\setminus Z$.
\end{proposition}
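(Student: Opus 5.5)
Since we only need to find a small $\Delta'$ and a countable exceptional set $Z$, the plan is to show that failure of nefness on a fibre is caused by rational curves of bounded degree. Each such curve class should then "sweep out" only a countable set of parameters unless it deforms to the central fibre. First shrink $\Delta$ so that every $\mathcal{X}_t$ is compact K\"ahler; this is the classical openness of the K\"ahler condition quoted in the introduction. By Proposition \ref{Pseudo-effective preserves in the deformation PRO}, $K_{\mathcal{X}_t}$ is pseudo-effective for every $t$. Hence, if $K_{\mathcal{X}_t}$ is not nef, Corollary \ref{cone theorem without pair for Kahler 4-fold COR} applied to $\mathcal{X}_t$ produces a rational curve $\Gamma\subset\mathcal{X}_t$ with $0<-K_{\mathcal{X}_t}\cdot\Gamma\leq 8$. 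Indeed, otherwise $\overline{NA}(\mathcal{X}_t)=\overline{NA}(\mathcal{X}_t)_{K\geq 0}$, and nefness follows by duality between the nef cone and $\overline{NA}$.

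Next, consider the relative Douady space (or relative Barlet cycle space) $\mathcal{C}(\mathcal{X}/\Delta)$ of $1$-cycles in the fibres of $\pi$. It has at most countably many irreducible components, and each component $\mathcal{C}_k$ maps to $\Delta$ with closed analytic image $\pi(\mathcal{C}_k)$. Properness of components can be arranged by bounding volume with respect to a fixed hermitian metric; this works fibrewise on compact subsets since the metric restricts to K\"ahler-type bounds locally uniformly. Each such image is therefore either a countable (discrete) subset of $\Delta$ or all of $\Delta$. Let $Z$ be the union, over all components, of the images that are discrete, intersected with a smaller disc $\Delta'$. Then $Z$ is countable.

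It remains to show that if $t\in\Delta'\setminus Z$ and $K_{\mathcal{X}_t}$ is not nef, we get a contradiction. The curve $\Gamma\subset \mathcal{X}_t$ from the first step lies in some component $\mathcal{C}_k$ whose image is not discrete, hence equals $\Delta$. So $[\Gamma]$ deforms in a flat family of cycles $\Gamma_s\subset\mathcal{X}_s$ over all $s$, in particular to a cycle $\Gamma_0\subset\mathcal{X}_0$. The intersection number $K_{\mathcal{X}_s}\cdot\Gamma_s=c_1(K_{\mathcal{X}/\Delta})\cdot[\Gamma_s]$ is constant along a connected family of cycles, being the pairing of a fixed integral class on $\mathcal{X}$ with homologous cycles. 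Hence $K_{\mathcal{X}_0}\cdot\Gamma_0=K_{\mathcal{X}_t}\cdot\Gamma<0$. Since $\Gamma_0$ is an effective cycle, this contradicts nefness of $K_{\mathcal{X}_0}$.

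The main obstacle is the properness/countability statement for the relative cycle space when $\mathcal{X}$ is not K\"ahler. What is needed is that each component of the relative Barlet space is proper over $\Delta$ (after shrinking), so that its image is closed analytic. I would try first to use the fact that $\pi$ is weakly K\"ahler (Fujiki's result cited in the introduction): there is a relative K\"ahler-type class on $\mathcal{X}$ over a shrunken disc. Cycles in a fixed component have bounded volume with respect to it, because their homology class is locally constant. Bishop's theorem then gives properness of each component over $\Delta'$. Should that fail, I would restrict to the countably many components of $\text{RatCurves}^n(\mathcal{X}/\Delta)$ coming from relative $\text{Hom}(\mathbb{P}^1,\mathcal{X}/\Delta)$. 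The closure in the Barlet space of each such component is still proper over $\Delta'$ by the same volume bound, and the same dichotomy argument applies.
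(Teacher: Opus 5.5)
Your proposal is correct and follows essentially the same route as the paper. Both arguments use properness (after shrinking) and countability of the components of the relative cycle space, the point-or-whole-disc dichotomy that defines $Z$, and deformation of a $K$-negative rational curve to $\mathcal{X}_0$ with constant intersection number against $c_1(K_{\mathcal{X}})$. The differences are minor: you get the negative rational curve from the cone-theorem corollary plus nef/$\overline{NA}$ duality instead of Cao--H\"oring's theorem, and you sketch the volume-bound/Bishop properness argument that the paper cites from Greb--Lehn--Rollenske.
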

\begin{proof}
	By \cite[Prop.2.6]{zbMATH06216389}, there is a small neighborhood $\Delta'\subset \Delta$ of $0$ such that every connected component of the relatively Chow space $\text{Chow}(\pi^{-1}(\Delta')/\Delta')$ is proper over $\Delta'$. Moreover, by \cite[Chap.\Romannum{8}.Thm.1.10]{zbMATH00611963}, there are only countably many irreducible components contained in $\text{Chow}(\pi^{-1}(\Delta')/\Delta')$. We set the decomposition into irreducible components to be
	\[
	\text{Chow}(\pi^{-1}(\Delta')/\Delta'):=\cup_{i\in I} \mathcal{D}_i.
	\] 
	
	\noindent Let $\mathcal{U}_i\subset\mathcal{D}_i\times \pi^{-1}(\Delta')$ be the universal family of $\mathcal{D}_i$. Consider the commutative diagram 
	\[
	\begin{tikzcd}
		\mathcal{U}_i\arrow[r,"pr_2"]\arrow[d,"\mu",swap]&\pi^{-1}(\Delta')\arrow[d,"\pi"]\\
		\mathcal{D}_i\arrow[r,"\pi_*"]&\Delta'
	\end{tikzcd}      
	\]
	Since the morphisms $\pi_*$, $\mu$ and $\pi$ are proper, the morphism $pr_2$ is proper. By the proper mapping theorem of Remmert, the image $\pi\circ pr_2(\mathcal{U}_i)$ is an analytic set in $\Delta'$, thus it is either a point in $\Delta'$ or $\Delta'$ itself. Moreover, if the image $\pi\circ pr_2(\mathcal{U}_i)$ is $\Delta'$, then for every fibre $\mathcal{X}_t$, there is a cycle parameterized by $\mathcal{D}_i$ that is also contained in the fibre $\mathcal{X}_t$.
	We set
	\[
	Z:=\{p\in\Delta'\,|\,\exists\, i\in I\text{ such that }\pi\circ pr_2(\mathcal{U}_i)=p\}
	\]
	The set $Z$ contains at most countably many points because there are only countably many irreducible components contained in $\text{Chow}(\pi^{-1}(\Delta')/\Delta')$.
	
	\bigskip
	
	\noindent Now we argue by contradiction.
	Assume there is a fibre $\mathcal{X}_{t_0}$ with $K_{\mathcal{X}_{t_0}}$ not nef and $\pi(\mathcal{X}_{t_0})\in\Delta'\setminus Z$. Then by Theorem \ref{Pseudo-effective preserves in the deformation PRO} , we know $K_{\mathcal{X}_{t_0}}$ is pseudo-effective and not nef. By \cite[Thm.1.3]{zbMATH07147342}, there exists a rational curve $C_{t_0}$ in $\mathcal{X}_{t_0}$ such that
	\[
	K_{\mathcal{X}_{t_0}}\cdot C_{t_0}<0.
	\]
	Let $\mathcal{D}_{i_0}$ be the connected component in $\text{Chow}(\pi^{-1}(\Delta')/\Delta')$ that contains $[C_{t_0}]$ and let $\mathcal{U}_{i_0}$ be the universal family of $\mathcal{D}_{i_0}$. Since we have  $\pi(\mathcal{X}_{t_0})\in\Delta'\setminus Z$, by the definition of $Z$,  the morphism $\pi\circ pr_2|_{\mathcal{U}_{i_0}}:\mathcal{U}_{i_0}\rightarrow \Delta$ is surjective.
	Therefore, for every $t\in\Delta'$, there exists 1-cycle $[C_t]\in\mathcal{U}_{i_0}$ contained in the fibre $\mathcal{X}_t$ and thus we have
 	\[
	   [C_t]=[C_{t_0}]\in H_2(\mathcal{X},\mathbb{Z}).
	\]
	Since the cohomology class $c_1(K_{\mathcal{X}_t})$ is constant in $H^2(\mathcal{X},\mathbb{Z})$, there is an integer $a$ such that for every $t\in\Delta'$, we have
	\[
	K_{\mathcal{X}_t}\cdot [C_t]\equiv a.
	\]
	In particular, for the central fibre $\mathcal{X}_0\simeq X$, there is an 1-cycle $[C_0]\in H_2(X,\mathbb{Z})$ such that
	\[
	K_{\mathcal{X}_0}\cdot [C_0]<0.
	\] 
	Thus $K_{\mathcal{X}_0}$ is not nef, a contradiction.
\end{proof}

\bigskip
	\section{Proof of Theorem \ref{nefness preserves by deformation THM} and Theorem \ref{general fibre of contraction deforms with the deformation family THM} }

\noindent In this section, we show how to deduce Theorem \ref{nefness preserves by deformation THM} and Theorem  \ref{general fibre of contraction deforms with the deformation family THM}  from Theorem \ref{Generic contraction exists for 4-fold THM}

\begin{proof}[Proof of Theorem \ref{general fibre of contraction deforms with the deformation family THM}]
	Let $\Delta'$ be the small neighborhood of $0$ that we select in Proposition \ref{countable many fibres with canonical divisor are not nef PRO}. Let $t$ be an arbitrary point in $\Delta'$. By Proposition \ref{Pseudo-effective preserves in the deformation PRO}, $K_{\mathcal{X}_t}$ is pseudo-effective and not nef. By Theorem \ref{Generic contraction exists for 4-fold THM}, the contraction for a $K_{\mathcal{X}_t}$-negative extremal ray $\mathbb{R}^+[\Gamma_{i_0}]$ exists.
	Let $f:\mathcal{X}_t\longrightarrow Y$ be the contraction. We say that a subvariety $F\subset\mathcal{X}_t$ satisfies \emph{Condition (A)} if the following holds: 
	\begin{itemize}
		\item $F$ is locally complete intersection.
		\item $H^1(F,N_{F/\mathcal{X}_t})=H^1(F,N_{F/\mathcal{X}})=0$
		\item $h^0(F,N_{F/\mathcal{X}_t})=d$ and $h^0(F,N_{F/\mathcal{X}})=d+1$.
	\end{itemize}
	We claim that there is a subvariety in the exceptional locus of the contraction which satisfies \emph{Condition (A)}.
	
	\medskip
	
	\noindent Assume the claim for the time being, we finish the proof. Let $\mathcal{H}_F$ be the Hilbert scheme by which $F$ is parameterized. The dimension of $\mathcal{H}_F$ is $d+1$ because we have
	\[
	h^0(F,N_{F/\mathcal{X}_t})=d\text{ and }h^0(F,N_{F/\mathcal{X}})=d+1.
	\] 
	Since we have 
	\[
	H^1(F,N_{F/\mathcal{X}_t})=H^1(F,N_{F/\mathcal{X}})=0,
	\]
	the morphism from $\mathcal{H}_F$ to $\text{Chow}(\pi^{-1}(\Delta')/\Delta')$ is an isomorphism.
	Let
	\[
	\mathcal{D}_F\subset \text{Chow}(\pi^{-1}(\Delta')/\Delta')
	\] 
	be the image of $\mathcal{H}_F$ in  $\text{Chow}(\pi^{-1}(\Delta')/\Delta')$.  $\mathcal{D}_F$ is an irreducible  ($d+1$)-dimensional variety in $\text{Chow}(\pi^{-1}(\Delta')/\Delta')$. Let $\mathcal{U}_F\subset \mathcal{D}_F\times\pi^{-1}(\Delta')$ be the universal family of $\mathcal{D}_F$. Consider the diagram
	\[
	\begin{tikzcd}
		\mathcal{U}_F\arrow[r,"pr_2"]\arrow[d,"\mu",swap]&\pi^{-1}(\Delta')\arrow[d,"\pi"]\\
		\mathcal{D}_F\arrow[r,"\pi_*"]&\Delta'
	\end{tikzcd}
	\]
	The morphism $\pi_*:\mathcal{D}_F\rightarrow\Delta$ must be surjective since we have
	\[
	h^0(F,N_{F/\mathcal{X}_t})=d\quad and\quad h^0(F,N_{F/\mathcal{X}})=d+1
	\]
	This implies that $F$ deforms with the deformation $\pi:\pi^{-1}(\Delta')\longrightarrow\Delta'$.
	
	\bigskip
	
	\noindent Now we prove the claim.  Since $\mathcal{X}_t$ is the fibre of the deformation family $\pi:\mathcal{X}\rightarrow\Delta$, we have 
	\[
	N_{\mathcal{X}_t/\mathcal{X}}|_F\simeq \mathcal{O}_F
	\] 
	It is sufficient to compute the vector bundle $N_{F/\mathcal{X}_t}$. Consider the exact sequence
	\begin{equation}
		0\longrightarrow N_{F/\mathcal{X}_t}
		\longrightarrow N_{F/\mathcal{X}}\longrightarrow N_{\mathcal{X}_t/\mathcal{X}}|_F\longrightarrow 0
		\label{Exact sequence normal bundle}.
	\end{equation}
	We prove that the exact sequence splits and deduce the normal bundle $N_{F/\mathcal{X}}$ from $N_{F/\mathcal{X}_t}$ Finally, we compute the cohomology group with the known information of $F$, $N_{F/\mathcal{X}_t}$ and $N_{F/\mathcal{X}}$.
	
	\bigskip
	
	\noindent \textbf{The First Case: The  extremal ray $\mathbb{R}^+[\Gamma_{i_0}]$ is small.}
	
	\smallskip
	
	\noindent If the extremal ray $\mathbb{R}^+[\Gamma_{i_0}]$ is small, by \cite{zbMATH01353483} or \cite[Thm.1.1]{zbMATH04079590}, every connected component $F$  in the exceptional locus of $f:\mathcal{X}_t\longrightarrow Y$ is isomorphic to $\mathbb{P}^2$ with the normal bundle $N_{F/\mathcal{X}_t}\simeq\mathcal{O}_{\mathbb{P}^2}(-1)\oplus\mathcal{O}_{\mathbb{P}^2}(-1)$. 
	
	\noindent By \cite[Chap.\Romannum{3}.Rmk.7.14]{zbMATH03572315} and \cite[Chap.\Romannum{3}.Thm.5.1]{zbMATH03572315}, the short exact sequence (\ref{Exact sequence normal bundle}) splits and the subvariety $F$ satisfies \emph{Condition (A)}.
	
	\medskip
	
	\noindent \textbf{The second case: the extremal ray $\mathbb{R}^+[\Gamma_{i_0}]$ is divisorial.} 
	
	\noindent There are three types of contraction: $(3,0)$, $(3,1)$ and $(3,2)$.(cf. Definition \ref{Type introduction of generic contraction DEF})
	
	\smallskip
	
	\noindent \underline{The Type $(3,2)$:}
	
	\medskip
	
	\noindent By \cite[Thm.2.1]{zbMATH03882563}, the general fibre $F$ of $f|_{D_{\circ}}:D_{\circ}\longrightarrow B_{\circ}$ is isomorphic to $\mathbb{P}^1$. The  normal bundle is
	\[
	N_{F/\mathcal{X}_t}\simeq\mathcal{O}_{\mathbb{P}^1}(-1)\oplus\mathcal{O}_{\mathbb{P}^1}^{\oplus2}.
	\]
	By \cite[Chap.\Romannum{3}.Rmk.7.14]{zbMATH03572315} and \cite[Chap.\Romannum{3}.Thm.5.1]{zbMATH03572315}, the short exact sequence (\ref{Exact sequence normal bundle}) splits and we get
	\[
	N_{F/\mathcal{X}}\simeq\mathcal{O}_{\mathbb{P}^1}(-1)\oplus\mathcal{O}_{\mathbb{P}^1}^{\oplus3}
	\]
	Thus the general fibre $F$ satisfies \emph{Condition (A)}.
	
	\bigskip
	\noindent	\underline{The Type $(3,1)$:}
	
	\medskip
	
	\noindent  By \cite[Main Theorem]{zbMATH01224743}, the general fibre $F$ is either $\mathbb{P}^2$ or an irreducible quadric $Q$ in $\mathbb{P}^3$. Let $D$ be the prime divisor which is contracted by $f$. We have
	\[
	N_{D/\mathcal{X}_t}|_F\simeq \mathcal{O}_{\mathbb{P}^2}(-1)\quad or\quad N_{D/\mathcal{X}_t}|_F\simeq \mathcal{O}_{\mathbb{P}^2}(-2)
	\] 
	if $F$ is isomorphic to $\mathbb{P}^2$,
	and
	\[
	N_{D/\mathcal{X}_t}|_F\simeq\mathcal{O}_F(-1)
	\]
	if $F$ is isomorphic to an irreducible quadric $Q$ in $\mathbb{P}^3$. Consider the short exact sequence
	\[
	\begin{tikzcd}[column sep=small]
		0\arrow[r]&N_{F/D}\arrow[r]&N_{F/\mathcal{X}_t}\arrow[r]&N_{D/\mathcal{X}_t}|_F\arrow[r]&0
	\end{tikzcd}
	\]
	By Theorem \ref{Cohomology vanishing of degree 2 hypersurface THM}, \cite[Chap.\Romannum{3}.Rmk.7.14]{zbMATH03572315} and \cite[Chap.\Romannum{3}.Thm.5.1]{zbMATH03572315}, the short exact sequence splits and we get
	\[
	N_{F/\mathcal{X}_t}\simeq \mathcal{O}_{\mathbb{P}^2}(-1)\oplus\mathcal{O}_{\mathbb{P}^2}\quad or\quad N_{F/\mathcal{X}_t}\simeq \mathcal{O}_{\mathbb{P}^2}(-2)\oplus\mathcal{O}_{\mathbb{P}^2}.
	\]
	if $F$ is isomorphic to $\mathbb{P}^2$,
	and
	\[
	N_{F/\mathcal{X}_t}\simeq\mathcal{O}_F(-1)\oplus\mathcal{O}_F
	\]
	if $F$ is isomorphic to an irreducible quadric $Q$ in $\mathbb{P}^3$. Hence, the short exact sequence (\ref{Exact sequence normal bundle}) splits and we have
	\[
	N_{F/\mathcal{X}}\simeq \mathcal{O}_{\mathbb{P}^2}(-1)\oplus\mathcal{O}_{\mathbb{P}^2}^{\oplus 2}\quad or\quad N_{F/\mathcal{X}_t}\simeq \mathcal{O}_{\mathbb{P}^2}(-2)\oplus\mathcal{O}_{\mathbb{P}^2}^{\oplus 2}.
	\]
	if $F$ is isomorphic to $\mathbb{P}^2$,
	and
	\[
	N_{F/\mathcal{X}_t}\simeq\mathcal{O}_F(-1)\oplus\mathcal{O}^{\oplus2}_F
	\]
	if $F$ is isomorphic to an irreducible quadric $Q$ in $\mathbb{P}^3$.
	Again by Theorem \ref{Cohomology vanishing of degree 2 hypersurface THM}, \cite[Chap.\Romannum{3}.Rmk.7.14]{zbMATH03572315} and \cite[Chap.\Romannum{3}.Thm.5.1]{zbMATH03572315}, the general fibre $F$ satisfies \emph{Condition (A)}.
	
	\bigskip
	
	\noindent \underline{The Type $(3,0)$:}
	
	\medskip
	
	\noindent  Let $F$ be the exceptional divisor and we prove $F$ is the subvariety which satisfies \emph{Condition (A)}.
	By \cite[Thm.2.1]{zbMATH03882563}, the variety $F$ must be one of the following cases:
	\begin{itemize}
		\item[1.] $F$ is isomorphic to $\mathbb{P}^3$ and the normal bundle is one of the following cases:
		\[
		N_{F/\mathcal{X}_t}\simeq\mathcal{O}_{\mathbb{P}^3}(-1),\; \mathcal{O}_{\mathbb{P}^3}(-2)\; or \;\mathcal{O}_{\mathbb{P}^3}(-3)
		\]
		\item[2.] $F$ is isomorphic to an irreducible quadric in $\mathbb{P}^4$ and the normal bundle is either $\mathcal{O}_F(-1)$ or $\mathcal{O}_F(-2)$
		\item[3.] $F$ is a Del Pezzo variety. The line bundles $K_{\mathcal{X}_t}$ and $F|_F$ are numerically equivalent. 
	\end{itemize}
	When $F$ is $\mathbb{P}^3$ or an irreducible quadric in $\mathbb{P}^4$, by Theorem \ref{Cohomology vanishing of degree 2 hypersurface THM}, \cite[Chap.\Romannum{3}.Rmk.7.14]{zbMATH03572315} and \cite[Chap.\Romannum{3}.Thm.5.1]{zbMATH03572315}, the short exact sequence (\ref{Exact sequence normal bundle}) splits and we get
	\[
	N_{F/\mathcal{X}}\simeq N_{F/\mathcal{X}_t}\oplus \mathcal{O}_F.
	\]
	Therefore, the subvariety $F$ satisfies \emph{Condition (A)} except for the third case.
	Now, we treat with the third case: $F$ is a Del Pezzo variety of dimension three. The line bundles $K_{\mathcal{X}_t}|_F$ and $F|_F$ are numerically equivalent.
	
	\bigskip
	
	\noindent\textbf{Claim:} When $F$ is a Del Pezzo variety and the line bundles $K_{\mathcal{X}_t}|_F$ and $F|_F$ are numerically equivalent, it satisfies \emph{Condition (A)}.
	\begin{proof}
		It is sufficient to prove that $H^1(F,\mathcal{O}_F)=0$ and $H^1(F,\mathcal{O}_F(F))=0$. 
		
		\noindent We first prove that, for all $i>0$, we have
		\[
		H^i(F,\mathcal{O}_F)=0
		\]
		Consider the long exact sequence of the direct image of sheaves induced by the short exact sequence
		\[
		\begin{tikzcd}[sep=small]
			0\arrow[r]&\mathcal{O}_{\mathcal{X}_t}(-F)\arrow[r]&\mathcal{O}_{\mathcal{X}_t}\arrow[r]&\mathcal{O}_F\arrow[r]&0
		\end{tikzcd}
		\]
		By Theorem  \ref{Generic contraction exists for 4-fold THM}, the line bundles $-K_{\mathcal{X}_t}$ and $\mathcal{O}_{\mathcal{X}_t}(-F)$ are $f$-ample. Therefore, by relative Kawamata-Viehweg vanishing Theorem \cite[Thm.1.1,(\romannum{2})]{zbMATH08084092}, for $i>0$
		We know:
		\[
		R^if_*\mathcal{O}_{\mathcal{X}_t}=R^if_*\mathcal{O}_{\mathcal{X}_t}(-F)=0,
		\]
		Hence, for all $i>0$, one has $H^i(F,\mathcal{O}_F)=R^if_*\mathcal{O}_F=0$.
		
		\noindent Now we start to prove $H^1(F,\mathcal{O}_F(F))=0$. By the following exact sequence twisting with $\mathcal{O}_{\mathcal{X}_t}(F)$  
		\[
		\begin{tikzcd}[sep=small]
			0\arrow[r]&\mathcal{O}_{\mathcal{X}_t}(-F)\arrow[r]&\mathcal{O}_{\mathcal{X}_t}\arrow[r]&\mathcal{O}_F\arrow[r]&0,
		\end{tikzcd}
		\]
		we get
		\[
		\begin{tikzcd}[sep=small]
			0\arrow[r]&\mathcal{O}_{\mathcal{X}_t}\arrow[r]&\mathcal{O}_{\mathcal{X}_t}(F)\arrow[r]&\mathcal{O}_F(F)\arrow[r]&0.
		\end{tikzcd}
		\]
		Consider the long exact sequence of the direct image of sheaves induced by the short exact sequence
		\[
		\begin{tikzcd}[sep=small]
			0\arrow[r]&\mathcal{O}_{\mathcal{X}_t}\arrow[r]&\mathcal{O}_{\mathcal{X}_t}(F)\arrow[r]&\mathcal{O}_F(F)\arrow[r]&0.
		\end{tikzcd}
		\]
		Since,  $R^if_{*}\mathcal{O}_{\mathcal{X}_t}$ vanishes, for all $i>0$,  we have
		\[
		R^if_{*}\mathcal{O}_{\mathcal{X}_t}(F)\simeq R^if_{*}\mathcal{O}_F(F)\simeq H^1(F,\mathcal{O}_F(F))
		\]
		By assumption,  $(-K_{\mathcal{X}_t}+F)|_F$ are numerically trivial. Therefore, by the definition of the contraction of the extremal ray $[\Gamma_{i_0}]$, we have
		\[
		(-K_{\mathcal{X}_t}+F)\cdot\Gamma_{i_0}=0.
		\] 
		This implies that there exists a line bundle $M$ over $Y$ such that 
		\[
		f^*M\simeq -K_{\mathcal{X}_t}+F
		\]
		by Lemma \ref*{relatively numerical trivial line bundle is the pull back of the line bundle on the base LEM}.
		Hence, we have
		\[
		R^1f_{*}\mathcal{O}_{\mathcal{X}_t}(K_{\mathcal{X}_t}-K_{\mathcal{X}_t}+F)=R^1f_{*}(K_{\mathcal{X}_t}+f^*M)=R^1f_{*}K_{\mathcal{X}_t}\otimes M
		\]
		Yet the sheaf $R^1f_{*}K_{\mathcal{X}_t}$ vanishes by Grauert-Riemenschneider vanishing Theorem\cite[Thm.2.2.2]{zbMATH00703571}, so $R^1f_*\mathcal{O}_{\mathcal{X}_t}(F)=0.$
		Therefore,  we have $H^1(F,\mathcal{O}_F(F))=0$
	\end{proof}  
	\noindent The ampleness of $-K_{\mathcal{X}_t}|_F$ is a consequence of that $-K_{\mathcal{X}_t}$ is $f$-ample.
\end{proof}

\begin{remark}
{\rm
	We cite \cite[Thm.2.1]{zbMATH03882563} for the classification of the fibres of the contraction in type $(3,0)$ and $(3,2)$. In the paper, he did use the property that the variety itself is projective. However, during our proof of Theorem \ref{Generic contraction exists for 4-fold THM}, we show that the prime divisor $D$ fulfills all the condition he obtained from the contraction theorem of a projective variety, and thus the classification remains valid.
}
\end{remark}

\begin{proof}[Proof of Theorem \ref{nefness preserves by deformation THM}]
	We argue by contradiction. Let $\Delta'$ be the small neighborhood that we selected in Proposition  \ref{countable many fibres with canonical divisor are not nef PRO}.
	We assume that there exists a point $t\in\Delta'$ such that $K_{\mathcal{X}_t}$ is not nef. By Proposition \ref{Pseudo-effective preserves in the deformation PRO}, we know $K_{\mathcal{X}_t}$ is pseudo-effective but not nef. 
	
	\noindent By Theorem \ref{general fibre of contraction deforms with the deformation family THM}, there is an irreducible subvariety $F\subset\mathcal{X}_t$ which deforms with $\mathcal{X}_t$. More precisely, the deformation family of $F$ is parameterized by an irreducible subvariety $\mathcal{D}$ in the relatively Chow variety $\text{Chow}(\pi^{-1}(\Delta')/\Delta')$
	such that the morphism $\pi_*:\mathcal{D}\longrightarrow\Delta'$ is surjective. Let $\mathcal{U}$ be the universal family of $\mathcal D$. Consider the diagram
	\[
	\begin{tikzcd}
		\mathcal{U}\arrow[r,"pr_2"]\arrow[d,"\mu",swap]&\pi^{-1}(\Delta')\arrow[d,"\pi"]\\
		\mathcal{D}\arrow[r,"\pi_*"]&\Delta'
	\end{tikzcd}
	\]
	Without the loss of generality, we assume $0\in\mathcal{D}$ and $\mu^{-1}(0)$ is the subvariety $F$. By Theorem \ref{Generic contraction exists for 4-fold THM} and Theorem \ref{general fibre of contraction deforms with the deformation family THM},, we know $pr_2^*(-K_{\mathcal{X}})|_F$ is ample. Yet ampleness is an open property, the flatness of the morphism $\mu:\mathcal{U}\longrightarrow\mathcal{D}$ implies that the line bundle $pr_2^*(-K_{\mathcal{X}})|_{\mu^{-1}(s)}$ is ample for $s$ contained in a small neighborhood $B(0,\varepsilon)\subset\mathcal{D}$. 
	Now we choose $s\in B(0,\varepsilon)$ to be very general, since the morphism $\pi_*:\mathcal{D}\longrightarrow\Delta'$ is surjective, the variety $pr_2(\mu^{-1}(s))$ is in a very general fibre of $\pi:\pi^{-1}(\Delta')\longrightarrow\Delta'$. The line bundle $-K_{\mathcal{X}}|_{pr_2(\mu^{-1}(s))}$ is ample by the projection formula.
	
	Hence, there is a  small neighborhood $B\subset\Delta$ of $t$, for every point $b\in B$, there always a subvariety $F_b$ contained in $\mathcal{X}_b$ such that the line bundle
	\[
	-K_{\mathcal{X}}|_{F_b}\simeq -K_{\mathcal{X}_b}|_{F_b}
	\]
	is ample. 
	This implies that $K_{\mathcal{X}_b}$ is not nef, a contradiction to Proposition \ref{countable many fibres with canonical divisor are not nef PRO}.
\end{proof}
	\bibliographystyle{alpha}
	\bibliography{reference_deformation}

@misc{arXiv:2501.18088,
	author = {Ou, Wenhao},
	title = {A characterization of uniruled compact {K{\"a}hler} manifolds},
	year = {2025},
	howpublished = {Preprint, {arXiv}:2501.18088 [math.{AG}] (2025)},
	url = {https://arxiv.org/abs/2501.18088},
	arXiv = {arXiv:2501.18088}
}

@misc{arXiv:2404.12007,
	author = {Hacon, Christopher and Paun, Mihai},
	title = {On the {Canonical} {Bundle} {Formula} and {Adjunction} for {Generalized} {Kaehler} {Pairs}},
	year = {2024},
	howpublished = {Preprint, {arXiv}:2404.12007 [math.{AG}] (2024)},
	url = {https://arxiv.org/abs/2404.12007},
	arXiv = {arXiv:2404.12007}
}

@article{zbMATH06541951,
	author = {H{\"o}ring, Andreas and Peternell, Thomas},
	title = {Minimal models for {K{\"a}hler} threefolds},
	fjournal = {Inventiones Mathematicae},
	journal = {Invent. Math.},
	issn = {0020-9910},
	volume = {203},
	number = {1},
	pages = {217--264},
	year = {2016},
	language = {English},
	doi = {10.1007/s00222-015-0592-x},
	zbMATH = {6541951},
	Zbl = {1337.32031}
}

@book{zbMATH01634463,
	author = {Debarre, Olivier},
	title = {Higher-dimensional algebraic geometry},
	fseries = {Universitext},
	series = {Universitext},
	issn = {0172-5939},
	isbn = {0-387-95227-6},
	year = {2001},
	publisher = {New York, NY: Springer},
	language = {English},
	zbMATH = {1634463},
	Zbl = {0978.14001}
}

@book{zbMATH00703571,
	author = {Beltrametti, Mauro C. and Sommese, Andrew J.},
	title = {The adjunction theory of complex projective varieties},
	fseries = {De Gruyter Expositions in Mathematics},
	series = {De Gruyter Expo. Math.},
	issn = {0938-6572},
	volume = {16},
	isbn = {3-11-014355-0},
	year = {1995},
	publisher = {Berlin: de Gruyter},
	language = {English},
	zbMATH = {703571},
	Zbl = {0845.14003}
}

@book{zbMATH00611963,
	editor = {Grauert, H. and Peternell, Th. and Remmert, R. and Gamkrelidze, R. V.},
	title = {Several complex variables {VII}. {Sheaf}-theoretical methods in complex analysis},
	fseries = {Encyclopaedia of Mathematical Sciences},
	series = {Encycl. Math. Sci.},
	issn = {0938-0396},
	volume = {74},
	isbn = {3-540-56259-1},
	year = {1994},
	publisher = {Berlin: Springer-Verlag},
	language = {English},
	zbMATH = {611963},
	Zbl = {0793.00010}
}

@article{zbMATH08084092,
	author = {Fujino, Osamu},
	title = {Vanishing theorems for projective morphisms between complex analytic spaces},
	fjournal = {Mathematical Research Letters},
	journal = {Math. Res. Lett.},
	issn = {1073-2780},
	volume = {32},
	number = {3},
	pages = {739--769},
	year = {2025},
	language = {English},
	doi = {10.4310/MRL.250728235924},
	zbMATH = {8084092}
}

@article{zbMATH01224743,
	author = {Takagi, Hiromichi},
	title = {Classification of extremal contractions from smooth fourfolds of {{\((3,1)\)}}-type},
	fjournal = {Proceedings of the American Mathematical Society},
	journal = {Proc. Am. Math. Soc.},
	issn = {0002-9939},
	volume = {127},
	number = {2},
	pages = {315--321},
	year = {1999},
	language = {English},
	doi = {10.1090/S0002-9939-99-05114-X},
	zbMATH = {1224743},
	Zbl = {0905.14009}
}

@article{zbMATH07147342,
	author = {Cao, Junyan and H{\"o}ring, Andreas},
	title = {Rational curves on compact {K{\"a}hler} manifolds},
	fjournal = {Journal of Differential Geometry},
	journal = {J. Differ. Geom.},
	issn = {0022-040X},
	volume = {114},
	number = {1},
	pages = {1--39},
	year = {2020},
	language = {English},
	doi = {10.4310/jdg/1577502017},
	zbMATH = {7147342},
	Zbl = {1442.14055}
}

@book{zbMATH00833161,
	author = {Koll{\'a}r, J{\'a}nos},
	title = {Rational curves on algebraic varieties},
	fseries = {Ergebnisse der Mathematik und ihrer Grenzgebiete. 3. Folge},
	series = {Ergeb. Math. Grenzgeb., 3. Folge},
	issn = {0071-1136},
	volume = {32},
	isbn = {3-540-60168-6},
	year = {1995},
	publisher = {Berlin: Springer-Verlag},
	language = {English},
	zbMATH = {833161},
	Zbl = {0877.14012}
}

@article{zbMATH04079590,
	author = {Kawamata, Yujiro},
	title = {Small contractions of four dimensional algebraic manifolds},
	fjournal = {Mathematische Annalen},
	journal = {Math. Ann.},
	issn = {0025-5831},
	volume = {284},
	number = {4},
	pages = {595--600},
	year = {1989},
	language = {English},
	doi = {10.1007/BF01443353},
	url = {https://eudml.org/doc/164574},
	zbMATH = {4079590},
	Zbl = {0661.14009}
}

@article{zbMATH01353483,
	author = {Andreatta, Marco and Wi{\'s}niewski, Jaros{\l}aw A.},
	title = {On contractions of smooth varieties},
	fjournal = {Journal of Algebraic Geometry},
	journal = {J. Algebr. Geom.},
	issn = {1056-3911},
	volume = {7},
	number = {2},
	pages = {253--312},
	year = {1998},
	language = {English},
	zbMATH = {1353483},
	Zbl = {0966.14012}
}

@article{zbMATH06216389,
	author = {Greb, Daniel and Lehn, Christian and Rollenske, S{\"o}nke},
	title = {Lagrangian fibrations on hyperk{\"a}hler manifolds -- question of {Beauville}},
	fjournal = {Annales Scientifiques de l'{\'E}cole Normale Sup{\'e}rieure. Quatri{\`e}me S{\'e}rie},
	journal = {Ann. Sci. {\'E}c. Norm. Sup{\'e}r. (4)},
	issn = {0012-9593},
	volume = {46},
	number = {3},
	pages = {375--403},
	year = {2013},
	language = {English},
	url = {smf4.emath.fr/en/Publications/AnnalesENS/4_46/html/ens_ann-sc_46_375-403.php},
	zbMATH = {6216389},
	Zbl = {1281.32016}
}

@article{zbMATH03882563,
	author = {Ando, Tetsuya},
	title = {On extremal rays of the higher dimensional varieties},
	fjournal = {Inventiones Mathematicae},
	journal = {Invent. Math.},
	issn = {0020-9910},
	volume = {81},
	pages = {347--357},
	year = {1985},
	language = {English},
	doi = {10.1007/BF01389057},
	url = {https://eudml.org/doc/143259},
	zbMATH = {3882563},
	Zbl = {0554.14001}
}

@book{zbMATH03572315,
	author = {Hartshorne, Robin},
	title = {Algebraic geometry},
	fseries = {Graduate Texts in Mathematics},
	series = {Grad. Texts Math.},
	issn = {0072-5285},
	volume = {52},
	year = {1977},
	publisher = {Springer, Cham},
	language = {English},
	zbMATH = {3572315},
	Zbl = {0367.14001}
}

@article{zbMATH05636215,
	author = {Wi{\'s}niewski, Jaroslaw A.},
	title = {Rigidity of the {Mori} cone for {Fano} manifolds},
	fjournal = {Bulletin of the London Mathematical Society},
	journal = {Bull. Lond. Math. Soc.},
	issn = {0024-6093},
	volume = {41},
	number = {5},
	pages = {779--781},
	year = {2009},
	language = {English},
	doi = {10.1112/blms/bdp025},
	zbMATH = {5636215},
	Zbl = {1193.14021}
}

@misc{arXiv:2510.23967,
	author = {Mu-Lin Li and Sheng Rao and Kai Wang},
	title = {Deformation of nef adjoint canonical line bundles},
	year = {2025},
	howpublished = {Preprint, {arXiv}:2510.23967 [math.{AG}] (2025)},
	url = {https://arxiv.org/abs/2510.23967},
	arXiv = {arXiv:2510.23967}
}

@article{zbMATH00058549,
	author = {Wi{\'s}niewski, Jaros{\l}aw A.},
	title = {On deformation of nef values},
	fjournal = {Duke Mathematical Journal},
	journal = {Duke Math. J.},
	issn = {0012-7094},
	volume = {64},
	number = {2},
	pages = {325--332},
	year = {1991},
	language = {English},
	doi = {10.1215/S0012-7094-91-06415-X},
	zbMATH = {58549},
	Zbl = {0773.14003}
}

@article{zbMATH03935471,
	author = {Bingener, J{\"u}rgen},
	title = {On deformations of {K{\"a}hler} spaces. {II}},
	fjournal = {Archiv der Mathematik},
	journal = {Arch. Math.},
	issn = {0003-889X},
	volume = {41},
	pages = {517--530},
	year = {1983},
	language = {English},
	doi = {10.1007/BF01198581},
	url = {https://eudml.org/doc/173300},
	zbMATH = {3935471},
	Zbl = {0584.32043}
}

@book{zbMATH05016783,
	author = {Morrow, James and Kodaira, Kunihiko},
	title = {Complex manifolds},
	edition = {Reprint with corrections of the 1971 original},
	isbn = {0-8218-4055-X},
	year = {2006},
	publisher = {Providence, RI: AMS Chelsea Publishing},
	language = {English},
	zbMATH = {5016783},
	Zbl = {1087.32501}
}

@misc{li2026deformationinvariancecanonicalnefness,
	title={Deformation invariance of canonical nefness in smooth Kahler morphisms}, 
	author={Mu-Lin Li and Xiao-Lei Liu and Sheng Rao},
	year={2026},
	eprint={2609.14435},
	archivePrefix={arXiv},
	primaryClass={math.AG},
	url={https://arxiv.org/abs/2609.14435}, 
}

@misc{hacon2026kahlermmptranscendentalbasepointfree,
	title={On the K\"ahler MMP and the transcendental base-point-free theorem}, 
	author={Christopher Hacon and Lingyao Xie},
	year={2026},
	eprint={2607.24986},
	archivePrefix={arXiv},
	primaryClass={math.AG},
	url={https://arxiv.org/abs/2607.24986}, 
}

@article{zbMATH03935470,
	author = {Bingener, J{\"u}rgen},
	title = {On deformations of {K{\"a}hler} spaces. {I}},
	fjournal = {Mathematische Zeitschrift},
	journal = {Math. Z.},
	issn = {0025-5874},
	volume = {182},
	pages = {505--535},
	year = {1983},
	language = {English},
	doi = {10.1007/BF01215480},
	url = {https://eudml.org/doc/173300},
	zbMATH = {3935470},
	Zbl = {0584.32042}
}
\end{document}